%% file: StratifyingPP.tex
\documentclass[11pt,letterpaper]{article}
\usepackage{amsfonts, amsmath, amssymb, amscd, amsthm, color, graphicx, mathrsfs, wasysym, setspace, mdwlist, calc, float, xcolor, tocloft, hyperref}

\hypersetup{colorlinks,hypertexnames=false,
    linkcolor={red!50!black},
    citecolor={blue!80!black},
    urlcolor={blue!80!black}}

\let\OLDthebibliography\thebibliography
\renewcommand\thebibliography[1]{
  \OLDthebibliography{#1}
  \setlength{\parskip}{1.5pt}
  \setlength{\itemsep}{1.5pt plus 0.3ex}
}

\newcommand{\PP}{\mathrm{PP}}

\newcommand{\e}{\varepsilon}
\newcommand{\NN}{\mathbb{N}}
\newcommand{\ZZ}{\mathbb{Z}}
\renewcommand{\wr}{\textrm{\,wr\,}}
\renewcommand{\d}{{\rm d}}

\newcommand{\la}{\langle}
\newcommand{\ra}{\rangle}
\newcommand{\Ker}{\operatorname{Ker}}

\newcommand{\RR}{\mathbb{R}}
\newcommand{\CC}{\mathbb{C}}

\newcommand{\one}{\mathbf{1}}

\newcommand{\CGHX}{Cay(G,X\sqcup \mathcal H)}

\newtheorem{thm}{Theorem}[section]
\newtheorem{cor}[thm]{Corollary}
\newtheorem{lem}[thm]{Lemma}
\newtheorem{prop}[thm]{Proposition}

\theoremstyle{definition}
\newtheorem{defn}[thm]{Definition}

\theoremstyle{remark}

\newtheorem{ex}[thm]{Example}

\title{A paradoxical route from hyperbolic geometry to proper proximality}
\author{D. Osin, K. Toyosawa, Z. Yang}
\date{}

\begin{document}
\maketitle

\begin{abstract}
For every integer $n\ge 2$, we introduce a new combinatorial condition $\PP(n)$ inspired by paradoxical decompositions of groups. As $n$ increases, the property $\PP(n)$ weakens, and the resulting hierarchy interpolates between group-theoretic manifestations of negative curvature and proper proximality. More precisely, we prove that a countable group is properly proximal if and only if it satisfies $\PP(n)$ for some $n$. On the other hand, every acylindrically hyperbolic group satisfies $\PP(2)$; furthermore, finitely generated $\PP(2)$-groups admit a geometric characterization: they are precisely the groups containing strongly quasi-convex, non-cyclic, free subgroups. In particular, we obtain that every countable acylindrically hyperbolic group is properly proximal. Finally, for every $n\in\NN$, we provide examples of finitely generated properly proximal groups that do not satisfy $\PP(n)$, thus showing that our hierarchy is indeed infinite.
\end{abstract}

\section{Introduction}

The notion of proper proximality, motivated by rigidity questions for von Neumann algebras, was introduced by Boutonnet, Ioana, and Peterson in \cite{BIP}. Every free ergodic probability measure preserving action of a countable group $G$ on a probability space $(X,\mu)$ gives rise to a group measure space factor $L^\infty(X)\rtimes G$, with a distinguished Cartan subalgebra
$L^\infty(X)$. A central problem is to determine when this subalgebra is uniquely determined, up to unitary conjugacy, by the ambient factor. Proper proximality provides a useful condition in this direction: if $G$ is properly proximal, the group von Neumann algebra $LG$ has no weakly compact Cartan subalgebra, and every weakly compact Cartan subalgebra of $L^\infty(X)\rtimes G$ is unitarily conjugate to $L^\infty(X)$ \cite{BIP}. For further developments and a survey of related results and open problems, we refer to \cite{DKEP23,IPR,Pet}.

Although motivated by applications to operator algebras, proper proximality is closely related to geometric and dynamical properties of groups. The original examples in \cite{BIP} include non-amenable bi-exact groups, non-elementary convergence groups, and lattices in non-compact semi-simple algebraic groups over local fields. Further examples were obtained from non-positive curvature and hierarchicaly hyperbolic geometry \cite{HHL23}, actions on trees and wreath products \cite{DKE}, positive first $\ell^2$-Betti numbers for exact groups \cite{Din24}, and small cancellation theory \cite{Oya26}.

Many of these examples belong to the class of acylindrically hyperbolic groups introduced in \cite{Osi16}. Recall that a group is \textit{acylindrically hyperbolic} if it admits a non-elementary action on a hyperbolic space satisfying an additional condition, which can be viewed as a weak form of properness for the induced action on the space of distinct pairs (for the precise definition, see Section 3.1). This class includes non-elementary hyperbolic and relatively hyperbolic groups, all but finitely many mapping class groups of punctured closed surfaces \cite{Bow}, $\mathrm{Out}(F_n)$ for $n\ge 2$ \cite{BF}, automorphism groups of non-elementary hyperbolic and finitely generated infinitely-ended groups \cite{GH}, most $3$-manifold groups \cite{MO,MO19}, groups of deficiency at least $2$ \cite{Osi15}, small cancellation groups \cite{GS}, and many other examples (see \cite{Osi18} and references therein). 

The question of whether every acylindrically hyperbolic group is properly proximal was open until now; for a discussion of this problem and some of its particular instances, see \cite{BIP,DKE,Oya26,Pet}. The main goal of our paper is to provide an affirmative answer for countable groups and, more generally, to understand the relation between proper proximality and manifestations of negative curvature in group theory.

Our approach is inspired by the classical characterization of nonamenability in terms of paradoxical decompositions. More precisely, we introduce the following condition combining paradoxical behavior with controlled expansion under right translations. For a subset $X$ of a set $Y$, we denote by $\one_X\colon Y\to \{ 0, 1\}$ the indicator function of $X$.

\begin{defn}\label{Def:PP}
A pair of subsets $A\subseteq B$ of a group $G$ is called
\emph{admissible} if $|Ag\setminus B|<\infty $ for all $g\in G$.
We say that $G$ satisfies $\PP(n)$ if there are admissible pairs
$A_i\subseteq B_i$ and elements $g_i\in G$, $1\le i\le n$, such that
\begin{equation}\label{Eq:PP}
 \sum_{i=1}^n\one_{g_iA_i}(x)
 \ \ge\ 1+\sum_{i=1}^n\one_{B_i}(x)
 \qquad \forall\, x\in G.
\end{equation}
The \emph{PP-complexity} of $G$ is defined to be the minimal $n$ such that $G$ satisfies $\PP(n)$, or $\infty$ if no such $n$ exists.
\end{defn}

\begin{ex}\label{Ex:Free} The free group with basis $\{ a, b\}$ satisfies $\PP(2)$. This can be seen in several ways. 

\begin{enumerate}
    \item[(a)] We can take $A_1=B_1$ (respectively, $A_2=B_2$) to be the set of all reduced words beginning with $a$ (respectively, $b$), and let $g_1=a^{-1}$, $g_2=b^{-1}$. The verification of the required properties is straightforward. For finitely generated groups, it is not difficult to show that the existence of equal sets $A_i=B_i$ satisfying Definition \ref{Def:PP} is equivalent to having infinitely many ends.
    \item[(b)] A slightly more sophisticated choice is illustrated in Fig. \ref{Fig0}: we can take $A_1$ (respectively, $A_2$) to be the set of all reduced words beginning with $a^2$ (respectively, $b^2$),  $B_1$ (respectively, $B_2$) to be the set of all reduced words beginning with $a$ (respectively, $b$), and let $g_1=a^{-2}$, $g_2=b^{-2}$. 
\end{enumerate}
\end{ex}

The latter example is especially instructive: it is used in the proof of the geometric characterization of $\PP(2)$-groups (see Theorem \ref{Thm:AH} (b)), and essentially the same idea appears in the proof of $\PP(2)$ for  acylindrically hyperbolic groups.

Our first result provides a characterization of proper proximality in terms of $\PP(n)$.

\begin{thm}\label{Thm:PP}
A countable group $G$ is properly proximal if and only if it satisfies $\PP(n)$ for some $n\ge 2$. 
\end{thm}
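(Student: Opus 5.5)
The plan is to use the Boutonnet--Ioana--Peterson characterization of proper proximality. Let $S(G)\subseteq \ell^\infty(G)$ be the algebra of functions $f$ with $f-f(\,\cdot\,g)\in c_0(G)$ for all $g\in G$. Then $G$ is properly proximal if and only if there is no left $G$-invariant state on $S(G)/c_0(G)$. Both directions of Theorem \ref{Thm:PP} are proved by translating between such states and the combinatorial inequality (\ref{Eq:PP}). The dictionary is the following observation: if $A\subseteq B$ is admissible, there should be an $f\in S(G)$ with $\one_A\le f\le \one_B$ modulo $c_0(G)$. Conversely, for $f\in S(G)$ and levels $s<t$, the pair $\{f>t\}\subseteq\{f>s\}$ is admissible, because $f(xg)-f(x)\to 0$ as $x\to\infty$.

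\emph{$\PP(n)\Rightarrow$ properly proximal.} Assume $\PP(n)$ holds and $\varphi$ is a left-invariant state on $S(G)/c_0(G)$. For each admissible pair $A_i\subseteq B_i$, take $f_i\in S(G)$ with $\one_{A_i}\le f_i\le \one_{B_i}$ modulo $c_0$. Then (\ref{Eq:PP}) gives, modulo $c_0$,
\begin{equation*}
\sum_i g_if_i\ \ge\ \sum_i \one_{g_iA_i}\ \ge\ 1+\sum_i\one_{B_i}\ \ge\ 1+\sum_i f_i .
\end{equation*}
Applying $\varphi$ and using invariance yields $\sum_i\varphi(f_i)\ge 1+\sum_i\varphi(f_i)$, a contradiction.

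The main obstacle in this direction is constructing $f$; this is where countability enters. Enumerate $G$ and choose finite sets $F_1\subseteq F_2\subseteq\dots$ exhausting $G$ with $F_kF_k\subseteq F_{k+1}$. Put
\begin{equation*}
f(x)=\max_k c_k\one_{AF_k\setminus E_k}(x),
\end{equation*}
where $c_k\searrow 0$ slowly with $c_k-c_{k+1}\to 0$, $c_1=1$, and $E_k\supseteq AF_k\setminus B$ is finite (this uses admissibility). The sets $E_k$ are chosen increasing, so that each finite region is only affected by finitely many levels. Right translation by $g\in F_k$ moves a point from level $k$ to level at most $k+1$. Hence $|f(xg)-f(x)|$ is bounded by $\sup_{j\ge k}(c_j-c_{j+1})$, up to finitely many exceptional $x$, and this tends to $0$. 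I expect the bookkeeping of the exceptional sets $E_k$, needed to get genuine $c_0$ errors, to be the delicate point. I would first try a diagonal choice in which level $k$ is discarded on a ball $\mathrm{Ball}(k)$.

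\emph{Properly proximal $\Rightarrow \PP(n)$.} If there is no invariant state, Hahn--Banach separation (the usual Day/Tarski-type argument) produces $f_1,\dots,f_m\in S(G)$, which may be taken with values in $[0,1]$, elements $g_1,\dots,g_m$, and $\e>0$ such that $\sum_i (g_if_i-f_i)\ge \e$ outside a finite set $F$. Discretize each $f_i$ using the layer-cake formula at levels $t_j=j/N$ with $N\gg m/\e$. Set $A_{ij}=\{f_i>t_{j+1}\}$ and $B_{ij}=\{f_i>t_j\}$; these pairs are admissible. Integer counting then gives
\begin{equation*}
\sum_{i,j}\one_{g_iA_{ij}}\ \ge\ 1+\sum_{i,j}\one_{B_{ij}}
\end{equation*}
off $F$, once $N\e\ge m+1$; the level shift absorbs the rounding errors. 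Finally, the finite exceptional set $F$ has to be removed. I would do this by deleting $F$ from the relevant $B_{ij}$ together with a compensating finite modification of the $A_{ij}$ (which preserves admissibility), or by adjoining one extra pair to cover $F$. This yields $\PP(n)$ with $n=mN$, possibly plus a bounded number of extra pairs.
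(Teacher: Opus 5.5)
\paragraph{The direction $\PP(n)\Rightarrow$ properly proximal.} This direction has a genuine gap: your function $f=\max_k c_k\one_{AF_k\setminus E_k}$ is in general not in $S(G)$. Each $E_k$ is finite, so all but finitely many points of $A$ lie in $AF_1\setminus E_1$ and get the value $c_1=1$. The value attached to level $k$ does not depend on where the point sits in $G$.

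Fix $g\in F_k$. Your estimate only controls points of level at least $k$. The bound $\sup_{j\ge k}(c_j-c_{j+1})$ would tend to $0$ along $x\to\infty$ only if the level of $x$ tended to infinity, and it does not. Whenever $Ag\setminus AF_1$ is infinite, infinitely many $x\in A$ satisfy $f(x)=1$ and $f(xg)\le c_2$. So $|f(xg)-f(x)|\ge c_1-c_2$ infinitely often, and no choice of finite sets $E_k$ can help.

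This is the typical situation, because $A$ is not assumed almost right-invariant (if it were, $\one_A$ itself would do). For example, in $F(a,b)$, let $\phi$ be the exponent sum of $a$. The pair $A=\{\phi(x)\ge|x|/2\}\subseteq B=\{\phi(x)\ge|x|/3\}$ is admissible. Yet for $g=b^M$ with $M$ large (depending on $F_1$), every $x=a^nb^n$ with $n\ge M$ lies in $A$ while $xg\in B\setminus AF_1$.

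The missing idea is that the passage from $1$ to $0$ must happen on a scale that grows at infinity. This uses admissibility more fully: for a proper left-invariant metric $\d$, only finitely many points lie within distance $R$ of both $A$ and $G\setminus B$. The paper takes $u(x)=\d(x,G\setminus B)/(\d(x,A)+\d(x,G\setminus B))$ (Lemma \ref{Lem:u}). The denominator tends to infinity, while each distance changes by at most $\d(1,g)$ under $x\mapsto xg$. Hence $u\in\mathcal R_0(G)$ and $\one_A\le u\le\one_B$ holds exactly. With such functions your contradiction argument goes through.

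\paragraph{The direction properly proximal $\Rightarrow\PP(n)$.} Here you follow the paper's route (Hahn--Banach, then layer-cake discretization). However, your removal of the finite exceptional set $F$ is not justified as written.
\begin{itemize}
\item One extra pair raises the left side of (\ref{Eq:PP}) by at most $1$ at a point, whereas the deficit at a point of $F$ may be as large as the number of pairs plus one.
\item Deleting points of $F$ from the $B_{ij}$ forces deleting them from the $A_{ij}$. This lowers the left side at points of $g_iF$ by up to $N$, possibly beyond the slack $N\e-2m$.
\end{itemize}

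The paper avoids $F$ altogether. By the proof of Lemma \ref{Lem:mean}, a left-invariant mean on $\mathcal R_0(G,\RR)$ automatically vanishes on $c_0(G)$ because $G$ is infinite. So Hahn--Banach can be run in $\mathcal R_0(G,\RR)$ itself, and Lemma \ref{Lem:HB} yields $\sum_i(\lambda_{g_i}u_i-u_i)\ge\e$ on all of $G$.

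Alternatively, your version can be repaired by summing the system over left translates. Use the pairs $h_sA_{ij}\subseteq h_sB_{ij}$ with elements $h_sg_ih_s^{-1}$, $1\le s\le T$, where the sets $h_sF$ are pairwise disjoint and $T$ exceeds the number of pairs by at least $2$.

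Finally, each $i$ can lose almost $2$ to rounding, so $N\e\ge m+1$ is not enough in general; $N\e\ge 2m$ is.
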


The next theorem shows that the strongest condition in our hierarchy, $\PP(2)$, is satisfied by all acylindrically hyperbolic groups. Furthermore, in the class of finitely generated groups, it has a precise geometric meaning. 

\begin{thm}\label{Thm:AH}
\begin{enumerate}
    \item[(a)] Every acylindrically hyperbolic group satisfies $\PP(2)$. In particular, every countable acylindrically hyperbolic group is properly proximal. 
    \item[(b)] A finitely generated group satisfies $\PP(2)$ if and only if it contains a strongly quasi-convex, non-cyclic, free subgroup.
\end{enumerate}
\end{thm}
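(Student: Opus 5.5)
For part (a), I would use the fact that an acylindrically hyperbolic group $G$ contains a hyperbolically embedded free subgroup $F=F(a,b)$ of rank two. Concretely, $G$ acts acylindrically and non-elementarily on a hyperbolic space $S$, and this yields loxodromic, WPD elements $a,b$. I would pass to large powers so that $\langle a,b\rangle$ is free and quasi-isometrically embedded via orbits, with all nontrivial elements loxodromic. The plan is to transport Example~\ref{Ex:Free}(b) from $F$ to $G$ through a ``projection'' $\pi\colon G\to F\cup\{\ast\}$. For $x\in G$, I would look at a geodesic in $S$ from the basepoint $s$ to $xs$. Then $\pi(x)$ records the first letter of the reduced word in $F$ whose orbit path this geodesic fellow-travels for a long time near $s$; if no such letter exists, $\pi(x)=\ast$.

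I would define $B_1$ as the set of $x$ whose geodesic from $s$ fellow-travels the axis direction of $a$ for at least a threshold $L$. I would define $A_1$ as the set of $x$ whose geodesic fellow-travels $a^{2}$ in the same sense with threshold $L'$ much larger than $L$. The sets $A_2,B_2$ are defined analogously with $b$, and I would take $g_1=a^{-2}$, $g_2=b^{-2}$. The inequality~\eqref{Eq:PP} has two parts.
\begin{enumerate*}
\item Every $x\in G$ lies in $a^{-2}A_1\cup b^{-2}A_2$. The reason is that the geodesic from $a^{-2}s$ or from $b^{-2}s$ to $xs$ must pass near $s$ for at least one of the two choices, by the ping-pong geometry of the free subgroup.
\item The sets $B_1,B_2$ are disjoint, and if $x\in B_i$ then $x$ is covered twice.
\end{enumerate*}
Both parts should follow from thin-triangle arguments once the thresholds are chosen suitably relative to $\delta$ and the translation lengths.

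Admissibility, $|A_ig\setminus B_i|<\infty$, is the key point where acylindricity enters. Right multiplication by $g$ moves the endpoint $xs$ to $xgs$, a bounded distance $d(s,gs)$ away. For most $x$ the initial segment of the geodesic is unchanged, so $xg$ stays in $B_i$ thanks to the slack $L'\gg L$. The exceptions are those $x$ with $d(s,xs)$ bounded. In a non-proper space there can be infinitely many such $x$, so the naive argument fails. This is the main obstacle. To handle it, I would include in $A_i$ only elements $x$ whose geodesic fellow-travels $a^2$ for distance $L'$ and which satisfy $d(s,xs)>R(g)$. This fails because $R$ depends on $g$.

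Instead, I would use the hyperbolically embedded structure: work in the relative Cayley graph $Cay(G,X\sqcup\mathcal H)$ with $\mathcal H=F$-cosets. I would define $\pi(x)$ via the first $F$-component of a geodesic from $1$ to $x$, and the letter is then read off from the component's label in $F$. The finiteness of the set of $x$ where right multiplication by $g$ changes the component label should then follow from local finiteness of the relative metric $\widehat d$ on $F$, which is the defining feature of hyperbolic embeddedness. The second sentence of (a) follows from Theorem~\ref{Thm:PP}.

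For part (b), to show that a strongly quasi-convex free subgroup $F$ gives $\PP(2)$, I would run the same construction with the nearest-point projection in the Cayley graph onto cosets of $F$. Strong quasi-convexity gives bounded coset projections and finiteness of bounded-distance sets, which yields admissibility. For the converse, given $A_i\subseteq B_i$ and $g_1,g_2$ satisfying~\eqref{Eq:PP}, I would show that large powers of suitable products of $g_1^{\pm1},g_2^{\pm1}$ generate a free group by ping-pong, using the sets $G\setminus B_i$ and $g_iA_i$ as ping-pong tables. Admissibility, meaning finite boundary under right translation, then gives Morse-type control of quasi-geodesics and hence strong quasi-convexity. I expect this converse to be the hardest step.
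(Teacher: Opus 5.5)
Part (a) follows the paper's strategy. Part (b), however, has genuine gaps in both directions, and (a) also needs some repairs.

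\emph{The implication $\PP(2)\Rightarrow$ strongly quasi-convex free subgroup.} The sentence ``admissibility then gives Morse-type control of quasi-geodesics'' is the entire content of this implication, and you give no mechanism for it. Also, the tables you name overlap: $g_1A_1\cup g_2A_2=G$ and $B_2\subseteq (G\setminus B_1)\cap g_1A_1$. So they cannot serve as ping-pong domains as stated. The paper takes $a_i=g_i^2$, $D_{a_i^{-1}}=A_i$ and $D_{a_i}=G\setminus g_i^2A_i$. The essential extra hypothesis is that $D_ah\cap D_c$ is finite for $a\ne c$. This comes from admissibility via the inclusions $D_{a_i^{-1}}h\cap D_c\subseteq A_ih\setminus B_i$ and $D_{a_i}h\cap D_c\subseteq g_i(A_ih^{-1}\setminus B_i)h$.

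Ping-pong alone only gives freeness. Strong quasi-convexity comes from Proposition~\ref{Prop:ping}. Finiteness of the sets $E_x$ rules out infinite ping-pong itineraries, so $G=\bigsqcup_{f\in F}fQ$. This yields an $F$-equivariant Lipschitz retraction $\pi(fq)=f$ with $\d_F(\pi(x),\pi(xg))\le 1$ for $x\notin FE_g$. A long excursion of a quasi-geodesic far from $F$ would then have a projection advancing only by about $\ell(q)/R$, which contradicts the quasi-geodesic inequality.

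\emph{The converse.} The pairs $A_i\subseteq B_i$ are fixed before $h$ is chosen. Admissibility therefore requires that, far from $F$, the projections of $x$ and $xh$ differ by less than a gap independent of $h$. For nearest-point projection in a word metric, this is a uniform contraction property that strong quasi-convexity alone does not supply: Morse subsets are in general only sublinearly contracting. Bounded coset projections do not address this.

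This is the missing idea, and the paper handles it with Lemma~\ref{Lem:d*}. It builds a metric $\d^*$ that agrees with $\d$ on $F$, which is exactly where strong quasi-convexity is used, but whose edge lengths tend to $0$ far from $F$. Helly's property and circumcenters then give an $F$-equivariant retraction $G\to\mathcal T$ that is Lipschitz for $\d^*$. Its displacement under bounded moves therefore tends to $0$ far from $F$ (Proposition~\ref{Prop:contr}), so the gap-one sets of Example~\ref{Ex:Free}(b) pull back to admissible pairs.

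\emph{Repairs to part (a).} Three points need fixing.

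First, a hyperbolically embedded $F_2$ exists only when $K(G)=1$. An infinite hyperbolically embedded subgroup is almost malnormal, hence contains every finite normal subgroup. So you must pass to $G/K(G)$ first and then take full preimages.

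Second, the projection must be an $F$-equivariantly chosen nearest point of $F$ itself, not the first $F$-component of a geodesic from $1$. In $F*\la t\ra$ with $X=\{t\}$, the elements $x_n=t^na^2$ would lie in $A_1$, while $x_n\cdot a^{-2}t=t^{n+1}\notin B_1$ for every $n$.

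Third, the set of $x$ at which right multiplication by $g$ changes the label need not be finite, because nearest points are determined only up to a bounded $\widehat d$-error. You need a gap between the defining conditions of $A_i$ and $B_i$ that exceeds this error. The error bound comes from the isolated-component estimate of Lemma~\ref{Lem:Omega}, not from local finiteness alone. A jump beyond the gap then forces the translated geodesic $xr$ from $x$ to $xg$ to meet $F$. Hence $x\in Fv^{-1}$ with $v$ in a finite set, and equivariance leaves only finitely many $x$.

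With these fixes, your (a) becomes the paper's argument, using one free subgroup in place of two cyclic ones.
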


\begin{figure}
\hspace{10mm}\scalebox{.8}{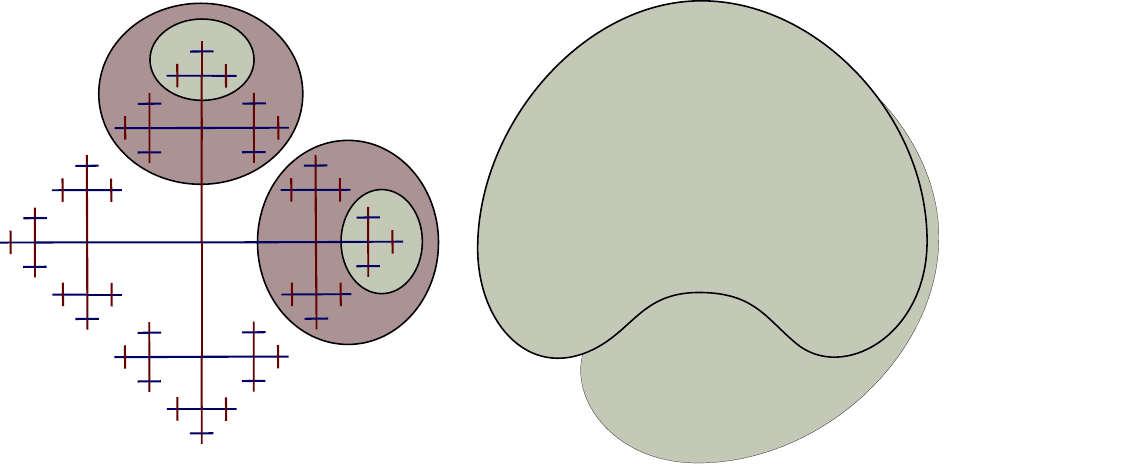}\\
  \caption{Condition $\PP(2)$ for the free group $F(a,b)$}\label{Fig0}
\end{figure}

Recall that a subgroup $H$ of a finitely generated group $G$ is \textit{strongly quasi-convex} if quasi-geodesics in $G$ with endpoints in $H$ stay uniformly close to $H$ (see Section 3.3 for details). This property generalizes the Morse lemma for geodesics in hyperbolic spaces. The combination of the main result of \cite{Osi16}, \cite[Theorem 2.24]{DGO}, and \cite[Theorem~2]{Sis} implies that every finitely generated acylindrically hyperbolic group contains a strongly quasi-convex, non-cyclic, free subgroup. However, the converse implication does not hold. Indeed, the recent paper \cite{AZ} provides a striking example of a finitely generated group $G$ such that every action of $G$ on a hyperbolic space is degenerate (specifically, elliptic or parabolic) while $G$ is Morse local-to-global and contains a Morse element. By \cite[Corollary I]{RST}, every such group contains a strongly quasi-convex, non-cyclic, free subgroup. 

One consequence of Theorem \ref{Thm:AH} (b) is that a finitely generated $\PP(2)$-group cannot decompose as a direct product of two infinite subgroups. Indeed, a finitely generated direct product of two infinite groups is wide in the terminology of \cite{DS}, and therefore cannot have cut points in asymptotic cones (see Example (1) following Definition 3.4 in \cite{BDM}), let alone Morse elements and strongly quasi-convex free subgroups. This observation implies that the PP-complexity of $F_2\times F_2$ is $3$. (The upper bound is provided by an elementary combinatorial argument.) We do not have explicit examples of groups of PP-complexity $n>3$. However,  we show that for every $n\in \NN$, there exists a properly proximal group of PP-complexity at least $n$. More precisely, we prove the following.

\begin{thm}\label{Thm:FI}
There exists a finitely generated, properly proximal group $G$ and a sequence of finite index  subgroups $H_i\le G$ such that the PP-complexity of $H_i$ tends to $\infty$ as $i\to \infty$. 
\end{thm}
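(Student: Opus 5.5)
The plan has two parts: a general obstruction to $\PP(n)$ coming from suitable states, and a group whose finite index subgroups supply more and more such states. Take a finitely generated, residually finite, properly proximal group $G$ whose finite index subgroups look more and more like products with many factors. My first candidate is the lamplighter-type group $G=(\ZZ/2)\wr F_2$, which is properly proximal by \cite{DKE}. Let $K_i\le F_2$ be a descending chain of finite index normal subgroups with $d_i=[F_2:K_i]\to\infty$, and put $H_i=(\bigoplus_{F_2}\ZZ/2)\rtimes K_i$. Then $H_i\cong (\ZZ/2)^{d_i}\wr K_i$, so each lamp now carries $d_i$ independent coordinates. Proper proximality of every $H_i$ follows from Theorem \ref{Thm:PP}, or directly from \cite{DKE}. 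The statement therefore reduces to proving that the PP-complexity of $H_i$ is at least some $n_i$ with $n_i\to\infty$.

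\emph{The obstruction.} Call a state $\mu$ on $\ell^\infty(H)$ \emph{right-compatible} if $\mu(\one_A)\le \mu(\one_B)$ for every admissible pair $A\subseteq B$. Any state vanishing on finitely supported functions that is a weak$^*$ limit of averages of right translates (for instance along a right Følner-type net in a "boundary direction") has this property, because $|Ag\setminus B|<\infty$. Now suppose that for given $g_1,\dots,g_n$ there is a right-compatible state $\mu$ with $\mu(\one_{g_iA})=\mu(\one_A)$ for all $i$ and all $A$. Applying $\mu$ to \eqref{Eq:PP} then gives
\[
\sum_{i=1}^n\mu(\one_{B_i}) \;\ge\; \sum_{i=1}^n\mu(\one_{g_iA_i})\;\ge\; 1+\sum_{i=1}^n\mu(\one_{B_i}),
\]
which is a contradiction. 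So it suffices to show: for every $n$ there is an $i$ such that any $n$ elements of $H_i$ admit a common left-invariant, right-compatible, "boundary" state.

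\emph{Producing the states.} For $g_1,\dots,g_n\in H_i$, only finitely many lamp coordinates are involved. When $d_i$ is large compared to $n$, a pigeonhole argument on the $d_i$ copies of $\ZZ/2$ should give a coordinate direction, and a subgroup of $H_i$, whose left action commutes with the $g_j$ up to a finite perturbation. That subgroup is amenable and infinite. Averaging over it, combined with a limit of right translates that escapes to infinity in that direction, should yield the desired $\mu$, in the same way that commuting infinite subgroups obstruct $\PP(2)$ in direct products. Heuristically, $H_i$ behaves like a product with about $d_i$ factors, and I expect each unit of complexity to be absorbed by one factor.

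\emph{Main obstacle.} The hardest step is constructing $\mu$ so that it is simultaneously left-invariant under the arbitrary elements $g_i$ and right-compatible. The $g_i$ need not lie in any amenable subgroup, so left invariance cannot come from a plain Følner argument. It has to come from a "product-like" decoupling in which the $g_i$ act trivially on the chosen boundary piece. If the wreath product example fails to give this decoupling, my fallback is to replace $G$ by a group with finite index subgroups that are genuinely quasi-isometrically closer to products with many factors, such as a suitable lattice. There I would use the argument from the $F_2\times F_2$ discussion, iterated, to show that $n$ sets cannot cover $n+1$ factors.
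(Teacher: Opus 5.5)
\textbf{There is a genuine gap, and the chosen example cannot work.} The PP-complexity of $H_i=(\ZZ/2)^{d_i}\wr K_i$ is bounded independently of $i$.

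To see this, let $w=(\phi,x)\in H_i$. Let $S(w)$ be the finite subtree of the Cayley tree of the free group $K_i$ spanned by $\{1,x\}\cup\mathrm{supp}\,\phi$. Fix two basis elements $a,b$ of $K_i$, let $J_c$ be the set of reduced words beginning with $c$, and put $u_c(w)=|S(w)\cap J_c|/|S(w)|$.

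Right multiplication by a fixed element changes $S(w)$ only in boundedly many vertices near $x$. Meanwhile $|S(w)|\to\infty$ because the lamp group is finite. Hence $u_a,u_b\in\mathcal R_0(H_i,\RR)$.

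Next, $S(aw)$ and $S(bw)$ agree with $aS(w)$ and $bS(w)$ up to one vertex. Integrate the free-group inequality $\one_{a^{-1}J_a}+\one_{b^{-1}J_b}\ge 1+\one_{J_a}+\one_{J_b}$ against the uniform probability measure on $S(w)$, with a short case check for the boundary vertex. This gives $\lambda_{a^{-1}}u_a-u_a+\lambda_{b^{-1}}u_b-u_b\ge 1/2$ on all of $H_i$.

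The discretization in the proof of Proposition \ref{Prop:prox-PP}, with $k=2$ and $\e\ge 1/2$, then yields $\PP(16)$ for every $H_i$, whatever $d_i$ and the rank of $K_i$ are. Your heuristic that $H_i$ ``behaves like a product with $d_i$ factors'' is also misleading, since every $H_i$ is quasi-isometric to $G$. The same remark rules out your fallback: a finite index subgroup is always quasi-isometric to the ambient group, so it cannot get ``closer to products''.

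\textbf{Your obstruction shows what is missing.} A state with $\mu(\one_{g_iA})=\mu(\one_A)$ for all $A$ is a left-invariant mean for $L=\la g_1,\dots,g_n\ra$ on $\ell^\infty(H)$. Restricting it to functions $lx_\alpha\mapsto f(l)$, for a transversal $\{x_\alpha\}$ of $L\backslash H$, gives an invariant mean on $L$. So such a $\mu$ exists only when $L$ is amenable. In that case right-compatibility is superfluous, because $A_i\subseteq B_i$ already gives $\mu(\one_{A_i})\le\mu(\one_{B_i})$; this is just the amenable version of Lemma \ref{Lem:subgi}.

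Your $H_i$ contain non-abelian free subgroups, for example inside $K_i$, and the $g_j$ can be taken there. So no ``decoupling'' can supply $\mu$.

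The missing idea is to pick a properly proximal group whose finite index subgroups $H_m$ have all $m$-generated subgroups finite (or at least amenable). The paper does this as follows:
\begin{itemize}
\item It takes the Ershov--Golan--Sapir Golod--Shafarevich group $G$ with this property.
\item It passes to $W=\ZZ_2\wr G$, which is properly proximal by the wreath-product theorem of Ding--Kunnawalkam Elayavalli because $G$ is non-amenable.
\item It lets $K_m$ be the preimage of $H_m$. Any $m$-generated subgroup of $K_m$ has finite image in $G$ and meets the locally finite kernel in a finitely generated, hence finite, subgroup. So it is finite, contradicting Lemma \ref{Lem:subgi}.
\end{itemize}
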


Note that the class of properly proximal groups is closed under commensurability \cite[Proposition 1.6]{BIP}; in particular, all subgroups $H_i$ are properly proximal and hence have finite PP-complexity. It is not difficult to show that the PP-complexity of a group is preserved by passing to quotients by finite normal subgroups and extensions with finite kernel. However, Theorem~\ref{Thm:FI} implies the following.

\begin{cor}
   The PP-complexity is not invariant under passing to subgroups of finite index. 
\end{cor}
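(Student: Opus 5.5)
The corollary follows from Theorem~\ref{Thm:FI} once we know that the ambient group $G$ there has finite PP-complexity, which is immediate. Let $G$ and $H_i\le G$ be as in Theorem~\ref{Thm:FI}. Since $G$ is finitely generated it is countable, and since it is properly proximal, Theorem~\ref{Thm:PP} shows that $G$ satisfies $\PP(m)$ for some $m\ge 2$. Hence the PP-complexity of $G$ is a finite number $c\le m$.

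By Theorem~\ref{Thm:FI}, the PP-complexity of $H_i$ tends to $\infty$ as $i\to\infty$. We may therefore choose an index $i$ for which the PP-complexity of $H_i$ exceeds $c$. Then $H_i$ is a finite index subgroup of $G$ whose PP-complexity differs from that of $G$, so the PP-complexity is not invariant under passing to subgroups of finite index.

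The same pair $(G, H_i)$ also exhibits two commensurable groups with different PP-complexity. We could also choose two indices $i\ne j$ with different complexities of $H_i$ and $H_j$. There is no real obstacle in this argument: all the difficulty lies in Theorem~\ref{Thm:FI} itself, namely in constructing $G$ and bounding the complexity of $H_i$ from below. The corollary needs only the finiteness of the complexity of $G$, which Theorem~\ref{Thm:PP} supplies.
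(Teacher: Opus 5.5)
Your proposal is correct and matches the paper's reasoning. $G$ is a countable properly proximal group, so Theorem~\ref{Thm:PP} gives it finite PP-complexity, while Theorem~\ref{Thm:FI} makes the complexities of the finite-index subgroups $H_i$ unbounded, so some $H_i$ must differ from $G$. (In the paper ``countable'' means countably infinite, and $G$ is infinite as a properly proximal group, so your use of Theorem~\ref{Thm:PP} is justified.)
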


The paper is organized as follows. Theorem \ref{Thm:PP} relating $\PP(n)$ to proper proximality is proved in the next section. In Section~3, we recall the necessary background on acylindrically hyperbolic groups and prove both parts of Theorem \ref{Thm:AH}. In the course of the proof, we obtain a version of the classical ping-pong lemma that allows us to construct strongly quasi-convex free subgroups of finitely generated groups (see Proposition \ref{Prop:ping}); this result appears to be of independent interest. Finally, in Section 4, we prove Theorem \ref{Thm:FI} and compute the PP-complexity of the direct product of two free groups.

\paragraph{Acknowledgments.} The first author has been supported by the NSF grant DMS-2405032. The second author was funded by the Deutsche Forschungsgemeinschaft (DFG, German Research Foundation) under Germany's Excellence Strategy EXC 2044/2 –390685587, Mathematics Münster: Dynamics–Geometry–Structure.

\paragraph{AI disclosure.} During the preparation of the first draft, which contained only a proof of proper proximality for acylindrically hyperbolic groups, discussions with ChatGPT 5.6 helped the second author explore possible approaches and identify several key references. The current version is based on a different mathematical approach, developed by the authors without the use of AI tools. In preparing this version, ChatGPT 6 was used solely for language editing, proofreading, and literature searches.

\section{Proper proximality and $\PP(n)$-groups}

In this paper, ``countable" always means ``countably infinite". The goal of this section is to prove Theorem \ref{Thm:PP}, which follows from the combination of Propositions \ref{Prop:prox-PP} and \ref{Prop:PP-prox}.

For a countable group $G$, let $\mathcal R_0(G)$ consist of the bounded functions $u\colon G\to\CC$ that coincide with their right translations modulo $c_0(G)$. That is, $u\in \ell^\infty(G)$ belongs to $\mathcal R_0(G)$ if and only if for all $\e >0$ and all $g\in G$, there exists a finite set $F\subseteq G$ such that $|u(xg)-u(x)|<\e$ for all $x\in G\setminus F$. Clearly, left translations
$(\lambda_gu)(x)=u(g^{-1}x)$ preserve this $C^*$-algebra and we have
$$
 \mathcal R_0(G)/c_0(G)
   =\big(\ell^\infty(G)/c_0(G)\big)^{G_r}
$$
in the notation of \cite{BIP}. By \cite[Theorem~4.3(iii)]{BIP}, a countably infinite group $G$ is properly proximal if and only if this quotient admits no $G$-invariant state.

\paragraph{2.1. From proper proximality to $\PP(n)$.}
Let $\ell^\infty(G,\RR)$ (respectively, $\mathcal R_0(G,\RR)$) be the subset of real-valued functions of $\ell^\infty(G)$ (respectively, $\mathbb R_0(G)$). Recall that a \emph{mean} on a subspace $V$ of $\ell^\infty (G, \RR)$ containing $\one_G$ is a positive linear functional $m\colon V\to \RR$ with $m(\one_G)=1$. Positivity gives $\inf u\le m(u)\le\sup u$ for every $u\in V$, so $m$ is automatically
continuous. The following characterization of properly proximal groups is well-known to experts (see, for example \cite[p. 2858]{DKEP23}). We include the proof for the convenience of the reader.

\begin{lem}\label{Lem:mean}
A countable group $G$ is properly proximal if and only if there is no $G$-invariant mean on $\mathcal R_0(G,\RR)$. 
\end{lem}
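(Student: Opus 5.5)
We reduce Lemma~\ref{Lem:mean} to the criterion of \cite[Theorem~4.3(iii)]{BIP} recalled above: $G$ is properly proximal if and only if the $C^*$-algebra $\mathcal R_0(G)/c_0(G)$ admits no $G$-invariant state. The task is therefore to match $G$-invariant states on this quotient with $G$-invariant means on $\mathcal R_0(G,\RR)$, in both directions.

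First, suppose $\varphi$ is a $G$-invariant state on $\mathcal R_0(G)/c_0(G)$. Composing with the quotient map $\mathcal R_0(G)\to\mathcal R_0(G)/c_0(G)$ and restricting to real-valued functions gives a functional $m$ on $\mathcal R_0(G,\RR)$. It is positive, since the quotient map is a $*$-homomorphism and hence sends positive functions to positive elements. It satisfies $m(\one_G)=1$, because $\varphi$ is a state of the unital algebra and $\one_G$ maps to the unit. It is $G$-invariant, because $\lambda_g$ preserves $c_0(G)$, so the quotient map intertwines the left actions. Hence $m$ is a $G$-invariant mean on $\mathcal R_0(G,\RR)$.

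Conversely, let $m$ be a $G$-invariant mean on $\mathcal R_0(G,\RR)$. The first step is to check that $m$ vanishes on $c_0(G,\RR)$; this is the only real point of the argument. Here we use that $G$ is infinite. For $u\in c_0(G,\RR)$ and any $\e>0$, the set $F=\{x: |u(x)|\ge \e\}$ is finite. Choose $g_1,\dots,g_N\in G$ such that the translates $g_kF$ are pairwise disjoint; this is possible because $\{g : gF\cap F'\neq\emptyset\}$ is finite for any finite $F'$. Then $\sum_{k=1}^N \lambda_{g_k}\one_F\le \one_G$, and by invariance and positivity $N\,m(\one_F)\le 1$. Since $N$ is arbitrary, $m(\one_F)=0$. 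Now $|u|\le \|u\|_\infty\one_F+\e\one_G$, so $|m(u)|\le\e$; letting $\e\to 0$ gives $m(u)=0$. Note also that $\one_F$ and $\lambda_{g_k}\one_F$ belong to $\mathcal R_0(G,\RR)$, since $c_0(G)\subseteq\mathcal R_0(G)$.

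Next we extend $m$ complex-linearly to $\mathcal R_0(G)$ by setting $m(u+iv)=m(u)+i\,m(v)$. This is a positive functional with $m(\one_G)=1$, hence a state, and it is $G$-invariant because $\lambda_g$ commutes with taking real and imaginary parts. Since $m$ kills $c_0(G)$ (both real and imaginary parts of an element of $c_0(G)$ lie in $c_0(G,\RR)$), it descends to a functional $\varphi$ on the quotient.

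It remains to see that $\varphi$ is positive. A positive element of a quotient $C^*$-algebra lifts to a positive element, for instance $a^*a$ lifts to $\tilde a^*\tilde a$ for any lift $\tilde a$ of $a$. Hence $\varphi$ is a $G$-invariant state on $\mathcal R_0(G)/c_0(G)$, and \cite[Theorem~4.3(iii)]{BIP} completes the proof.
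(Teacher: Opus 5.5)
Your proof is correct and follows essentially the same route as the paper: a $G$-invariant state pulls back to an invariant mean, and conversely an invariant mean vanishes on $c_0(G,\RR)$ because $G$ is infinite, extends complex-linearly, and descends to an invariant state on $\mathcal R_0(G)/c_0(G)$, after which \cite[Theorem~4.3(iii)]{BIP} applies. The differences are cosmetic: you use disjoint translates of the finite set $\{|u|\ge\e\}$ where the paper uses singletons plus continuity, and you verify positivity of the induced functional on the quotient explicitly, which the paper calls straightforward.
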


\begin{proof}
We write $[u]$ for the image of $u\in \mathcal R_0(G,\RR)$ in $\mathcal R_0(G)/c_0(G)$. For any state $\phi$ on $\mathcal R_0(G)/c_0(G)$, the linear functional $m\colon \mathcal R_0(G, \RR)\to \RR$ defined by $m(u)=\phi([u])$ is a mean. 

Conversely, suppose that we have a left-invariant mean $m$ on $\mathcal R_0(G,\RR )$. Since $G$ is infinite, $m$ vanishes on $c_0(G;\RR)=c_0(G)\cap \ell^\infty (G,\RR )$; indeed, left-invariance implies $m(\one_{g})=m(\one_{1})$ for all $g\in G$. The normalization condition $m(\one_G)=1$ forces $m(\one_g)=0$ for all $g\in G$, and continuity implies vanishing on $c_0(G;\RR)$. Hence, the map $\phi\colon \mathcal R_0(G)/c_0(G)\to \mathbb C$ given by $$\phi([u+iv])=m(u)+im(v)$$ for any $u,v\in \mathcal R_0(G, \RR)$ is well-defined. It is straightforward to verify that $\phi $ is a state. 

Thus, the required equivalence follows immediately from \cite[Theorem~4.3(iii)]{BIP}.
\end{proof}

We say that a function $u\in \ell^\infty(G,\RR)$ is \textit{uniformly positive} if $\inf_G u >0$. The proof of following lemma makes use of the classical argument going back to the theory of (non)amenable groups, see also \cite[Theorem~4.3]{BIP}.

\begin{lem}\label{Lem:HB}
Suppose a countable group $G$ is properly proximal. Then there exist $u_1,\ldots,u_k\in \mathcal R_0(G,\RR)$ and $g_1,\ldots,g_k\in G$ such that
$\sum_{i=1}^k(\lambda_{g_i}u_i- u_i)$ is uniformly positive.
\end{lem}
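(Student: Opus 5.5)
We argue by contraposition, via a Hahn--Banach separation in the spirit of the classical proof that non-amenable groups have no invariant means. Let $V=\mathcal R_0(G,\RR)$, a real Banach space under the sup norm containing $\one_G$. Consider the linear subspace
$$
W=\operatorname{span}\{\lambda_g u-u : u\in V,\ g\in G\}\subseteq V,
$$
which is well defined because left translations preserve $V$. A general element of $W$ has the form $\sum_{i=1}^k(\lambda_{g_i}u_i-u_i)$: the $\RR$-span collapses to such sums, since $c(\lambda_g u-u)=\lambda_g(cu)-cu$. So the conclusion of the lemma says exactly that $W$ contains a uniformly positive function. Assume, for a contradiction, that no element $w\in W$ satisfies $\inf_G w>0$. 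We will build a $G$-invariant mean on $V$, contradicting Lemma~\ref{Lem:mean}.

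First, we show that $W$ does not meet the open convex cone $P=\{v\in V:\inf_G v>0\}$. The cone $P$ is nonempty, since $\one_G\in P$, and it is open in the sup norm: if $\inf v=\delta>0$, the ball of radius $\delta/2$ around $v$ lies in $P$. By the standing assumption, $W\cap P=\emptyset$.

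Next, we apply the geometric Hahn--Banach theorem to the subspace $W$ and the disjoint open convex set $P$. It gives a continuous linear functional $\varphi$ on $V$ with $\varphi|_W=0$ and $\varphi>0$ on $P$. Because $\varphi$ vanishes on a linear subspace, the usual separation inequality $\varphi(w)\le c<\varphi(p)$ forces $c\ge0$ and $\varphi|_W=0$. Positivity of $\varphi$ then follows. If $v\ge0$, then $v+\e\one_G\in P$ for every $\e>0$, so $\varphi(v)\ge-\e\varphi(\one_G)$; letting $\e\to0$ gives $\varphi(v)\ge 0$. Moreover $\varphi(\one_G)>0$, so we may set $m=\varphi/\varphi(\one_G)$. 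This $m$ is a mean on $V$, and it is $G$-invariant because $m(\lambda_g u)-m(u)=m(\lambda_gu-u)=0$.

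Finally, Lemma~\ref{Lem:mean} says that a properly proximal group admits no $G$-invariant mean on $\mathcal R_0(G,\RR)$, which is the desired contradiction. Hence $W$ meets $P$, and this produces the required $u_i$ and $g_i$.

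The only delicate step is making sure the separation theorem applies. This requires $P$ to be open in the topology in which $\varphi$ is continuous. That holds because $V$ is a closed subspace of $\ell^\infty(G,\RR)$ with the sup norm, and interior points there are precisely the uniformly positive functions. The rest is routine.
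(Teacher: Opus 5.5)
Your proof is correct and follows essentially the same route as the paper: assume the subspace $W$ contains no uniformly positive function, use Hahn--Banach to produce a $G$-invariant mean on $\mathcal R_0(G,\RR)$, and contradict Lemma~\ref{Lem:mean}. The only difference is the form of Hahn--Banach. You use the geometric form, separating $W$ from the open cone of uniformly positive functions. The paper uses the analytic form, extending $m_0(c\one_G+w)=c$ from $\RR\one_G+W$ subject to the sublinear bound $v\mapsto\sup_G v$.
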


\begin{proof}
Consider the subspace 
$$
W= \left\{ \left. \sum_{i=1}^\ell (\lambda_{f_i}v_i-v_i)\; \right| \; v_i\in \mathcal R_0(G,\RR), \; f_i\in G,\; \ell\in \NN\right\}.
$$ 
Suppose $W$ contains no uniformly positive function. Then $\sup_G w\ge 0$ for every $w\in W$, as otherwise $-w$ would be uniformly positive. It follows that
$W\cap\RR\one_G=\{0\}$. Hence, the linear functional $$m_0(c\one_G+w)=c,\qquad  \forall\, c\in\RR,\; \forall\, w\in W$$ is well defined and dominated by $v\mapsto \sup_G v$, since $\sup_G(c\one_G+w)=c+\sup_G w\ge c$. By the Hahn--Banach theorem, $m_0$ can be extended to a linear functional
$m$ on $\mathcal R_0(G,\RR)$ such that $m(v)\le\sup_G v$. Applying this inequality to $-v$ gives $\inf_G v\le m(v)$. Thus $m$ is a mean on $\mathcal R_0(G,\RR)$, and its vanishing on $W$ makes it left-invariant. This contradicts  Lemma~\ref{Lem:mean}.
\end{proof}

\begin{prop}\label{Prop:prox-PP}
Every countable, properly proximal group satisfies $\PP(n)$ for some $n\in \NN$.
\end{prop}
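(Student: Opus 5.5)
The plan is to discretize the functions produced by Lemma~\ref{Lem:HB} into indicator functions of their level sets. The level sets will supply the sets $A_i$, and slightly lower level sets will supply the sets $B_i$.

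By Lemma~\ref{Lem:HB}, there are $u_1,\dots,u_k\in\mathcal R_0(G,\RR)$, elements $g_1,\dots,g_k\in G$ and $\delta>0$ such that $\sum_{i=1}^k(\lambda_{g_i}u_i-u_i)\ge\delta$ everywhere on $G$. Adding constants and multiplying by a positive scalar does not affect membership in $\mathcal R_0(G,\RR)$ or the sign of $\lambda_gu-u$. So I may assume every $u_i$ takes values in $[0,1]$, after replacing $\delta$ by a smaller positive constant.

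Fix a large integer $N$ and, for $1\le i\le k$ and $1\le j\le N$, set
$$A_{ij}=\{x\in G : u_i(x)>j/N\},\qquad B_{ij}=\{x\in G: u_i(x)>(j-1)/N\}.$$
\begin{itemize}
\item \emph{Containment.} Clearly $A_{ij}\subseteq B_{ij}$.
\item \emph{Admissibility.} Let $g\in G$ and suppose $y=xg\in A_{ij}g\setminus B_{ij}$. Then $u_i(x)>j/N$ and $u_i(xg)\le (j-1)/N$, so $|u_i(xg)-u_i(x)|>1/N$. Since $u_i\in\mathcal R_0(G)$, this happens only for $x$ in a finite set, so $|A_{ij}g\setminus B_{ij}|<\infty$.
\item \emph{Approximation.} Put $f_i=\frac1N\sum_{j=1}^N\one_{A_{ij}}$. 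The layer-cake formula gives $\|f_i-u_i\|_\infty\le 1/N$.
\item \emph{Translation.} Since $\lambda_g\one_A=\one_{gA}$, we have $\lambda_{g_i}f_i=\frac1N\sum_j\one_{g_iA_{ij}}$.
\end{itemize}

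Combining these facts, for every $x$ we get
$$\sum_{i,j}\one_{g_iA_{ij}}(x)-\sum_{i,j}\one_{A_{ij}}(x)=N\sum_i(\lambda_{g_i}f_i-f_i)(x)\ \ge\ N\delta-2k.$$
Moreover, $\one_{B_{i,j}}=\one_{A_{i,j-1}}$ for $j\ge 2$, and $\one_{B_{i1}}\le 1$. Hence $\sum_j\one_{B_{ij}}\le 1+\sum_j\one_{A_{ij}}$, that is, $\sum_{i,j}\one_{B_{ij}}\le k+\sum_{i,j}\one_{A_{ij}}$. Substituting,
$$\sum_{i,j}\one_{g_iA_{ij}}\ \ge\ \sum_{i,j}\one_{B_{ij}}+N\delta-3k.$$
Choosing $N\ge(3k+1)/\delta$ yields inequality \eqref{Eq:PP}. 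The $n=kN$ admissible pairs $A_{ij}\subseteq B_{ij}$ are used with the elements $g_{ij}=g_i$, so $G$ satisfies $\PP(kN)$.

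The only real obstacle is the bookkeeping in the last step. The errors from discretization ($2k$) and from passing from $A$ to $B$ ($k$) must be absorbed by the gain $N\delta$. This is why the gap between the levels defining $A_{ij}$ and $B_{ij}$ must equal the mesh $1/N$, so that the $B$'s telescope against the $A$'s. The same mesh is also exactly what admissibility needs, through the defining property of $\mathcal R_0(G)$ applied with $\e=1/N$.
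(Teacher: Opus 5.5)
Your proposal is correct and follows the paper's proof essentially step for step. You discretize the functions from Lemma~\ref{Lem:HB} into level sets of mesh $1/N$ and place $B_{ij}$ one level below $A_{ij}$, so admissibility follows from the defining property of $\mathcal R_0(G)$ with $\e=1/N$, and you absorb the discretization error by taking $N$ large compared with $k/\delta$. The only differences are cosmetic. The paper counts memberships in the level sets directly, gets an error of $2k$, and then uses integrality of the left-hand side; you telescope $B_{ij}=A_{i,j-1}$ and pay an extra $k$, which is harmless. When you rescale into $[0,1]$, use one common positive scalar for all $u_i$, as the paper does, so that the sum stays uniformly positive.
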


\begin{proof}
Let $G$ be a countable, properly proximal group and let $u_1,\ldots,u_k\in \mathcal R_0(G,\RR)$ and $g_1,\ldots,g_k\in G$ be the data provided by Lemma \ref{Lem:HB}. Replacing each $u_i$ with $(u_i+c)/2c$ for a sufficiently large $c\in \RR$, we can assume that $0\le u_i(x)\le 1$ for all $x\in G$. Let 
$$
\e= \inf_G\left(\sum_{i=1}^k (\lambda_{g_i}u_i-u_i) \right) >0.
$$
Let $L=\lceil 2k/\e\rceil$. For every $1\le i\le k$ and $1\le j\le L$, we define
$$
A_{ij}=\{ x\in G\mid Lu_i(x)\ge j\}\qquad {\rm and}\qquad B_{ij}=\{ x\in G\mid Lu_i(x)\ge j-1\}.
$$
Clearly, $A_{ij}\subseteq B_{ij}$. Further, if $x\in A_{ij}$ and $xg\notin B_{ij}$ for some $g\in G$, then 
\begin{equation}\label{Eq:ui}
u_i(x)-u_i(xg)\ge j/L - (j-1)/L=1/L.
\end{equation}
Since $u_i(x)-u_i(xg)\in c_0(G)$, only finitely many $x$ can satisfy (\ref{Eq:ui}) for every fixed $g\in G$. Thus, each pair $A_{ij}\subseteq B_{ij}$ is admissible.

Recall that $0\le u_i(x)\le 1$ by our assumption. Elementary counting implies that for any fixed $1\le i\le k$,  every element $x\in G$ is contained in more than 
$Lu_i(x)-1$ sets $A_{ij}$ and at most $Lu_i(x)+1$ sets $B_{ij}$. Therefore, we obtain
$$
   \sum_{i,j} (\one_{g_iA_{ij}}-\one_{B_{ij}}) > L \sum_{i=1}^k \Big(u_i(g_i^{-1}x)-u_i(x)\Big) - 2k\ge L\e -2k \ge 0.
$$
Since the left-hand side is integer-valued, it is at least $1$. Thus, $G$ satisfies $\PP(kL)$.
\end{proof}

\paragraph{2.2. From $\PP(n)$ to proper proximality.} We begin with an auxiliary result.

\begin{lem}\label{Lem:u}
For any admissible pair of subsets $A\subseteq B$ of a countable group $G$, there exists a function $u\colon G\to [0,1]$ such that $u\in\mathcal R_0(G)$ and $\one_A\le u\le\one_B$.
\end{lem}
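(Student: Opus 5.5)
Since $G$ is countable, we enumerate $G=\{h_1,h_2,\dots\}$ and build $u$ as an average of indicator functions of sets interpolating between $A$ and $B$. The idea is that admissibility only gives us finitely many "leaks" of $A$ out of $B$ under each right translation. A single step from $A$ to $B$ would produce jumps of size $1$ in $u(x)-u(xg)$. We therefore want many intermediate layers, so that each layer boundary contributes a jump of size only $1/N$. At the same time, only finitely many points may see large jumps for each fixed $g$.

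\textbf{Construction.} Let $S_n=\{h_1^{\pm1},\dots,h_n^{\pm1}\}\cup\{1\}$ and let $S_n^k$ denote its $k$-fold product set. We define
$$
C_k^{(n)}=\bigl(A\,S_n^{k}\bigr)\cap B ,\qquad 0\le k\le n,
$$
so that $A=C_0^{(n)}\subseteq C_1^{(n)}\subseteq\dots\subseteq C_n^{(n)}\subseteq B$. Then set
$$
u_n=\frac1{n+1}\sum_{k=0}^{n}\one_{C_k^{(n)}} .
$$
Clearly $\one_A\le u_n\le\one_B$. The set $A S_n^k\setminus B$ is finite by admissibility, being a finite union of the sets $Ag\setminus B$.

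Now fix $g=h_m$ with $m\le n$. If $x\in C_k^{(n)}$ and $xg\in B$, then $xg\in C_{k+1}^{(n)}$. The exceptional points, those with $x\in C_n^{(n)}$ and $xg\notin B$, form the set $(AS_n^{n}g\setminus B)g^{-1}$, which is finite. The same reasoning applies to $g^{-1}$. Consequently, outside a finite set $F_{n,g}$ we have $|u_n(xg)-u_n(x)|\le 1/(n+1)$, since the layer indices of $x$ and $xg$ differ by at most one.

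\textbf{Gluing.} The main obstacle is that $F_{n,g}$ depends on $n$, so no single $u_n$ lies in $\mathcal R_0(G)$. We handle this by gluing the $u_n$ along an exhaustion of $G$ by finite sets. We take $u=\sum_n \one_{E_n}u_n$, where $E_n$ are annuli $D_{n}\setminus D_{n-1}$ for an increasing exhaustion $D_n$ by finite sets. The $D_n$ are chosen inductively to grow fast enough that $D_{n}\supseteq F_{n+1,h_m}\cup F_{n,h_m}$ for all $m\le n+1$, together with the $S_{n+1}$-neighbourhood of $D_{n-1}$.

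On a single annulus, the estimate $|u(xg)-u(x)|\le 1/(n+1)$ holds away from the finite exceptional sets, which were absorbed into earlier $D$'s. Across the boundary between consecutive annuli, we need $u_n$ and $u_{n+1}$ to be comparable. This comparison is the delicate step. Since the layers $C^{(n)}_k$ and $C^{(n+1)}_k$ both measure a word-length-type distance from $A$ truncated by $B$, we try to make the layers nested in $n$. To do so, we replace $C_k^{(n)}$ by the $n$-independent sets $C_k=\bigl(A\,T^k\bigr)\cap B$ for a fixed proper length function $T$, which enlarges slowly, and then rescale by slowly varying weights. The first thing to try is the normalization
$$
u(x)=\max\bigl(0,\,1-d(x)/N(x)\bigr),
$$
where $d(x)$ is the distance from $x$ to $A$ computed with respect to a proper left-invariant metric inside $B$, and $N(x)\to\infty$ slowly along the exhaustion. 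With this choice, right translation changes $d$ by at most $|g|$ outside a finite set, and changes $N$ only rarely. Either of these changes shifts $u$ by $o(1)$, which gives $u\in\mathcal R_0(G)$.
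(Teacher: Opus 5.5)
Your layer functions $u_n$ are correct individually, but the gluing is never carried out, and as set up it cannot work. Passing from $S_n$ to $S_{n+1}$ can collapse layer indices: a point of $Ah_{n+1}\cap B$ outside $AS_n^n$ has $u_{n+1}\ge (n+1)/(n+2)$ but $u_n=0$. So $u_n$ and $u_{n+1}$ need not be close near the annulus boundaries. Everything therefore rests on your last formula $u=\max(0,1-d/N)$, and there the one step where admissibility has to be used is missing.

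Since $u$ vanishes off $B$, for fixed $g$ and $x\in B$ with $xg\notin B$ you need $u(x)\to 0$. That is, $d(x)\ge (1-o(1))N(x)$ must hold at all but finitely many such $x$. Your claim that right translation changes $d$ by at most $|g|$ outside a finite set does not address this, whichever metric ``inside $B$'' means. The set of such $x$ can be infinite, and the jump of $u$ across the boundary of $B$ is governed by $d(x)/N(x)$, not by $|g|$.

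Nor does ``$N\to\infty$ slowly'' suffice; no rate fixed independently of $(A,B)$ works. Given any $N\to\infty$, take $G=\ZZ$, $A=\{p_k\}$ and $B=\bigcup_k[p_k-m_k,p_k+m_k]$ with $m_k\to\infty$, which is an admissible pair. Choose the $p_k$ so sparse that the intervals are far apart and $N(p_k+m_k)\ge 2m_k$. Then $u(p_k+m_k)\ge 1/2$ while $u(p_k+m_k+1)=0$ for every $k$, so $u\notin\mathcal R_0(G)$.

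What you are missing is the observation that drives the paper's proof. For a proper left-invariant metric $\d$, admissibility makes $\{a\in A\mid \d(a,G\setminus B)\le R\}\subseteq\bigcup_{\d(1,g)\le R}(Ag\setminus B)g^{-1}$ finite for every $R$. Hence only finitely many points lie within $R/2$ of both $A$ and $G\setminus B$, i.e.\ $\d(x,A)+\d(x,G\setminus B)\to\infty$.

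With this your approach can be repaired, e.g.\ with $d(x)=\d(x,A)$ and $N(x)=\bigl(\d(x,A)+\d(x,G\setminus B)\bigr)^{1/2}$. The paper, however, needs no $N$, enumeration or exhaustion: it normalizes by this quantity directly, setting $u(x)=\d(x,G\setminus B)/\bigl(\d(x,A)+\d(x,G\setminus B)\bigr)$ after disposing of the cases $A=\emptyset$ or $B=G$. Then $\one_A\le u\le\one_B$ is immediate. Since each distance changes by at most $\d(1,g)$ under $x\mapsto xg$, one gets $|u(xg)-u(x)|\le\d(1,g)/\bigl(\d(xg,A)+\d(xg,G\setminus B)\bigr)\to 0$.
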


\begin{proof}
The cases $A=\emptyset$ or $B=G$ are immediate. Otherwise, we can choose a proper left-invariant integer-valued metric $\d$ on $G$ and set
\begin{equation}\label{eq:u}
 u(x)=\frac{\d(x,G\setminus B)}
 {\d(x,A)+\d(x,G\setminus B)}.
\end{equation}
Note that for each $R>0$, the set
$$
 \{a\in A\mid \d(a,G\setminus B)\le R\}
 \subseteq\bigcup_{\d(1,g)\le R}(Ag\setminus B)g^{-1}
$$
is finite by our assumption. Properness of $\d$ then implies that there are only finitely many points within distance $R/2$ of both $A$ and $G\setminus B$. Therefore, the denominator in (\ref{eq:u}) tends to infinity as $x\to \infty$ in $G$. Since $|\d(x,S)-\d(xg,S)|\le \d(1,g)$ for every non-empty $S\subseteq G$, it follows that
$u(xg)-u(x)\to0$ as $x\to \infty$ for every fixed $g\in G$. Thus,
$u\in\mathcal R_0(G)$. The required pointwise inequalities
follow immediately from (\ref{eq:u}). 
\end{proof}

\begin{prop}\label{Prop:PP-prox}
If a countable group satisfies $\PP(n)$ for some $n$, then it is properly proximal.
\end{prop}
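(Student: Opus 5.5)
We argue by contradiction, via Lemma~\ref{Lem:mean}. Suppose $G$ satisfies $\PP(n)$ with admissible pairs $A_i\subseteq B_i$ and elements $g_i$, $1\le i\le n$, as in Definition~\ref{Def:PP}. Suppose also that $G$ is not properly proximal. By Lemma~\ref{Lem:mean} there is then a $G$-invariant mean $m$ on $\mathcal R_0(G,\RR)$, and the goal is to derive a contradiction from inequality~(\ref{Eq:PP}).

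The first step is to move from indicator functions, which need not lie in $\mathcal R_0(G)$, to functions that do. For each $i$, Lemma~\ref{Lem:u} provides $u_i\in\mathcal R_0(G)$ with values in $[0,1]$ and $\one_{A_i}\le u_i\le \one_{B_i}$. Since $\lambda_{g_i}u_i(x)=u_i(g_i^{-1}x)$ and $\one_{g_iA_i}(x)=\one_{A_i}(g_i^{-1}x)$, we get the pointwise bounds
$$
\lambda_{g_i}u_i\ \ge\ \one_{g_iA_i}\qquad\text{and}\qquad u_i\ \le\ \one_{B_i}.
$$
Summing over $i$ and applying (\ref{Eq:PP}) gives, for every $x\in G$,
$$
\sum_{i=1}^n\big(\lambda_{g_i}u_i-u_i\big)(x)\ \ge\ \sum_{i=1}^n\one_{g_iA_i}(x)-\sum_{i=1}^n\one_{B_i}(x)\ \ge\ 1 .
$$

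Now apply $m$. The function $w=\sum_i(\lambda_{g_i}u_i-u_i)$ is real-valued and lies in $\mathcal R_0(G,\RR)$, since left translations preserve $\mathcal R_0(G)$. By positivity, $m(w)\ge \inf_G w\ge 1$. On the other hand, left-invariance gives $m(\lambda_{g_i}u_i)=m(u_i)$ for each $i$, so $m(w)=0$. This contradiction shows that $G$ is properly proximal.

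Given Lemma~\ref{Lem:u}, there is no substantial obstacle. The one point needing care is the comparison of indicators under translation: the ``lower'' set $A_i$ must appear with the translate $g_i$, and the ``upper'' set $B_i$ without it. This is exactly why the sandwich $\one_A\le u\le\one_B$ is the right interface between Definition~\ref{Def:PP} and the mean. The real work, building $u$ in $\mathcal R_0(G)$ from the admissibility condition, is already done in Lemma~\ref{Lem:u}.
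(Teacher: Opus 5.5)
Your proposal is correct and follows essentially the same route as the paper: you use Lemma~\ref{Lem:u} to sandwich $\one_{A_i}\le u_i\le\one_{B_i}$ and deduce $\sum_i(\lambda_{g_i}u_i-u_i)\ge\one_G$ from (\ref{Eq:PP}). You then contradict the existence of a left-invariant mean on $\mathcal R_0(G,\RR)$ via Lemma~\ref{Lem:mean}, exactly as the paper does.
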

\begin{proof}
Let $G$ be a countable group, $A_i\subseteq B_i$ and $g_i\in G$, $1\le i\le n$, be as in Definition \ref{Def:PP}. For every $1\le i\le n$, let $\one_{A_i}\le u_i\le\one_{B_i}$ be the function provided by Lemma \ref{Lem:u}. Inequality (\ref{Eq:PP}) yields
$$
 \sum_{i=1}^n(\lambda_{g_i}u_i-u_i) \ge\sum_{i=1}^n(\one_{g_iA_i}-\one_{B_i})\ge\one_G.
$$
Assuming there is a left-invariant mean $m$ on $\mathcal R_0(G,\RR)$, we obtain
$$
1= m(\one_G)\le m\left(\sum_{i=1}^n(\lambda_{g_i}u_i-u_i)\right) = \sum_{i=1}^n\Big(m(\lambda_{g_i}u_i)-m(u_i)\Big)=0,
$$
a contradiction. Using Lemma \ref{Lem:mean}, we have concluded that $G$ is properly proximal.
\end{proof}

\section{$\PP(2)$ groups and negative curvature}

In this section, we discuss the relation between $\PP(2)$ and various manifestations of negative curvature in group theory. In particular, we give a proof of Theorem \ref{Thm:AH}. 

\paragraph{3.1. Background on hyperbolically embedded subgroups.}
The proof of the first claim of Theorem \ref{Thm:AH} makes use of the notion of hyperbolically embedded subgroups introduced in \cite{DGO}, following ideas in \cite{Osi06a,Osi06b}. We provide a very brief review of the necessary background and refer to \cite{DGO,Osi16,Osi18} for more details.

Let $G$ be a group, $\{H_i\}_{i\in I}$ a collection of subgroups of $G$, $X$ a subset of $G$ such that $G$ is generated by $X$ and $\bigcup_{i\in I}H_i$. We let $\mathcal H=\bigsqcup_{i\in I}H_i$ and regard $X\sqcup\mathcal H$ as a generating alphabet, distinguishing letters from different members of the union even when they represent the same element of $G$. For every $i\in I$, the Cayley graph $Cay(H_i,H_i)$ (where all elements of $H_i$ are taken as generators) can be thought of as a subgraph of $\CGHX$. Given $g,h\in H_i$, let $d_i(g,h)$ be the length of a shortest path in the Cayley graph $\CGHX$ connecting $g$ to $h$ and containing no edges of the subgraph $Cay(H_i,H_i)$; we set $d_i(g,h)=\infty$ if no such path exists.

\begin{defn}\label{Def:HE}
The collection $\{H_i\}_{i\in I}$ is \emph{hyperbolically embedded} in $G$ with respect to $X$, written $\{H_i\}_{i\in I}\hookrightarrow_h(G,X)$, if the following conditions hold.
\begin{enumerate}
\item[(a)] $\CGHX$ is hyperbolic.
\item[(b)] For every $i\in I$ and every $n\in \NN$, there are only finitely many elements $h\in H_i$ satisfying $d_i(1,h)\le n$.
\end{enumerate}
\end{defn}

Let $p$ be a path in $\CGHX$. An \emph{$H_i$-component} of $p$ is a maximal nontrivial subpath all of whose edges are labelled by letters from $H_i$. If $p$ is a cycle, maximality is understood in the cyclic sense. Two $H_i$-components of $p$ are \emph{connected} if their vertices lie in the same left coset of $H_i$. An $H_i$-component is \emph{isolated} in $p$ if it is not connected to any other $H_i$-component of $p$. Clearly, every $H_i$-component of a geodesic consists of a single edge and is isolated in that geodesic.

We use the following consequence of \cite[Proposition 4.14]{DGO}.

\begin{lem}\label{Lem:Omega}
Suppose that $\{H_i\}_{i\in I}\hookrightarrow_h(G,X)$. There exists $C>0$ such that, for every geodesic $n$-gon $p$ in $\CGHX$ and every isolated $H_i$-component $q$ of $p$, the element $h\in H_i$ represented by the label of $q$ satisfies $d_i(1,h)\le Cn$.
\end{lem}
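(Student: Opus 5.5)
The statement is Lemma~\ref{Lem:Omega}. I would derive it from \cite[Proposition 4.14]{DGO}, which I take to say the following. There is a constant $C$ such that for any cycle $p$ in $\CGHX$ and any set $S$ of isolated $H_i$-components of $p$, we have $\sum_{q\in S} d_i(1,h_q)\le C\,\ell(p)$, where $h_q$ is the element represented by the label of $q$ and $\ell(p)$ is the length of $p$. Since a geodesic $n$-gon can have arbitrarily long sides, this bound in terms of $\ell(p)$ is not directly what we want. The plan is therefore to reduce the geodesic $n$-gon to a cycle whose relevant part has length linear in $n$, using hyperbolicity of $\CGHX$ (Definition~\ref{Def:HE}(a)).

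First I treat the degenerate case. Suppose the component $q$ is an edge on a side $s$ of $p$; it is a single edge because $s$ is geodesic. Let $\delta$ be the hyperbolicity constant. By thinness of geodesic polygons, every point of $s$ lies within $\delta(n-2)$ or so of the union of the other sides, say within $k\delta\log n$ for a suitable $k$; a linear bound in $n$ is enough here. So choose vertices $x$ and $y$ on $s$ just before and just after $q$. From each, draw a geodesic $t_x$, respectively $t_y$, of length at most $Kn$ to points $x'$, $y'$ on other sides. The constant $K$ depends only on $\delta$.

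Next I form a new cycle $c$. It is made of $t_x$, the edge $q$, the reversed path $t_y$, and the part of $p$ running from $y'$ back to $x'$ that avoids $q$. This cycle is still long. To fix that, I apply the same shortcutting idea recursively, or more simply I connect consecutive vertices of $p$ by short geodesics. The cleaner version is to take $c$ to be the geodesic polygon with vertices $x$, $y$, $x'$, $y'$ and the original corners of $p$ lying between $y'$ and $x'$. That is still an $n+O(1)$-gon with long sides, so the length of $c$ does not help either.

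The genuine fix is to apply \cite[Proposition 4.14]{DGO} not to $c$ but to the short quadrilateral $x\,y\,y'\,x'$. Its sides are $q$, $t_y$, a geodesic from $y'$ to $x'$ of length at most $2Kn+1$, and $t_x$, so its total length is $O(n)$. The difficulty is that $q$ must remain isolated in this quadrilateral. I expect that to be the main obstacle.

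If $q$ were connected to an $H_i$-component $r$ of one of the three other sides, I would exploit this as follows. Since $r$ lies in the same coset of $H_i$ as $q$, I can replace $q$ together with the detour through $r$ by a single $H_i$-edge. I would then run an induction on $n$, or a minimality argument: choose the shortcuts $t_x$, $t_y$ and the connecting geodesic so that the quadrilateral has minimal length. Then a connection to $r$ either produces a shorter configuration or shows that $q$ was connected to a component of the original polygon $p$, contradicting isolation in $p$. Roughly, an edge connecting the coset vertices lets one replace the endpoint of $t_x$ or $t_y$.

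Once isolation is secured, the bound gives
\[
d_i(1,h)\le C\,\ell(\text{quadrilateral})=O(n).
\]
Renaming the constant then yields $d_i(1,h)\le Cn$, as claimed.
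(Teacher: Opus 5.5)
The gap is the step you flag as the main obstacle, and that step is the whole content of the lemma. For a single component, the cycle bound you attribute to \cite[Proposition 4.14]{DGO} is trivial. If $q$ is an isolated $H_i$-component of any cycle $c$, the rest of $c$ is a path from $q_+$ to $q_-$ with no $H_i$-edge whose endpoints lie in the coset $q_-H_i$, since such an edge would belong to a component connected to $q$. So $d_i(1,h)\le\ell(c)$ directly from the definition of $d_i$. The lemma is therefore equivalent to finding a cycle of length $O(n)$ in which $q$ stays isolated, and your proposal does not do this.

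Your quadrilateral cannot deliver it as constructed. First, $t_x$ and $t_y$ start at $q_-$ and $q_+$, which lie in the coset $q_-H_i$ itself, so they may begin with an $H_i$-edge of that coset. Second, $[y',x']$ is a new geodesic, not part of $p$, so a connection between $q$ and one of its components contradicts nothing about $p$.

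The minimality heuristic does not repair this. Rerouting the geodesic $t_y$ through a connected component $r$ never shortens it, because $\ell(t_y)\le 1+\mathrm{d}(r_-,y')$. Replacing $q$ by another $H_i$-edge changes the element whose $d_i$-length you are bounding. Minimal configurations can also fail isolation even though $q$ is isolated in $p$. For example, let $p$ be a bigon whose second side passes through $q_-$ and $q_+$ and joins them by an $X$-edge. Then $x'=x$, $y'=y$, trivial $t_x,t_y$ and $[y',x']=q^{-1}$ give a configuration of minimal length that yields no bound, and $q$ is connected to no component of $p$.

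More fundamentally, $x'$ and $y'$ lie within $Kn$ of $q$, so no distance argument can keep a short path between them off the coset. If $x',y'$ lie on one side of $p$, you can close up along that side: this is short because the side is geodesic, and free of coset edges by isolation in $p$. If they lie on different sides, the route along $p$ can be arbitrarily long.

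The missing idea is a mechanism that makes isolation automatic. Every auxiliary path should start at distance from $q_-H_i$ greater than its own length, so it cannot contain a vertex of that coset. The passage around $q$ itself should run along subpaths of sides of $p$, where isolation in $p$ applies. Concretely, take $x,y$ on the side containing $q$ at distance $N$ from $q$, with $N$ larger than the jump lengths. Then follow other sides of $p$ past $q$, inserting new connecting geodesics only far from $q$. For a triangle with $q$ near its centre, this means jumping to the third side, following it past the centre towards the opposite vertex, and crossing to the second side at a point at distance much greater than $\delta$ from $q$ (or at the vertex, if it is close). Then follow the second side back and jump to $y$.

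Doing this with length linear in $n$ for arbitrary $n$-gons is exactly what \cite[Proposition 4.14]{DGO} provides. That proposition is already a statement about polygons with geodesic sides, not a bound in terms of the length of a cycle. The paper gives no independent argument; it simply quotes that result.
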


All group actions on metric spaces considered in this paper are assumed to be isometric.

\begin{defn}
An action of a group $G$ on a metric space $S$ is \emph{acylindrical} if, for every $\varepsilon>0$, there exist $R,N>0$ such that, for any $x,y\in S$ with $d(x,y)\ge R$, at most $N$ elements $g\in G$ satisfy
$$
d(x,gx)\le\varepsilon\quad {\rm and}\quad d(y,gy)\le\varepsilon.
$$
A group $G$ is \emph{acylindrically hyperbolic} if it admits a non-elementary acylindrical action on a hyperbolic space; equivalently, $G$ is not virtually cyclic and admits an acylindrical action on a hyperbolic space with unbounded orbits (see \cite{Osi16}).
\end{defn}

Every acylindrically hyperbolic group $G$ has a unique maximal finite normal subgroup, called its \emph{finite radical} and denoted by $K(G)$ \cite[Theorem 2.24]{DGO}. We will need the following result, which is a particular case of \cite[Lemma 6.18]{DGO}.

\begin{lem}\label{Lem:HE}
Let $G$ be an acylindrically hyperbolic group with $K(G)=\{1\}$. For every $m\in \NN$, there exist infinite order elements $h_1,\ldots,h_m\in G$ and $X\subseteq G$ such that $\{\langle h_1\rangle,\ldots,\langle h_m\rangle\}\hookrightarrow_h (G,X)$.
\end{lem}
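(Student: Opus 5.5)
The plan is to obtain the statement from the machinery of \cite{DGO} and \cite{Osi16}, in three steps: produce many independent loxodromic WPD elements, arrange that their maximal elementary subgroups are exactly the cyclic groups they generate, and then invoke the theorem that such a family is hyperbolically embedded.

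\emph{Step 1.} By \cite{Osi16}, $G$ admits a non-elementary acylindrical action on a hyperbolic space $S$. Every loxodromic element of such an action satisfies the WPD condition. For a loxodromic WPD element $g$, let $E(g)$ be the maximal virtually cyclic subgroup containing $g$, namely the stabilizer of the pair $\{g^{\pm\infty}\}\subseteq\partial S$. By \cite[Theorem 6.8]{DGO}, any finite family $\{E(g_1),\ldots,E(g_m)\}$ of such subgroups is hyperbolically embedded in $(G,X)$ for a suitable $X\subseteq G$, provided the $g_i$ are pairwise non-commensurable. Here non-commensurable means that no nontrivial power of $g_i$ is conjugate to a power of $g_j$ for $i\ne j$; equivalently, their fixed point pairs lie in distinct $G$-orbits.

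\emph{Step 2.} This is the main step, and the only place where $K(G)=\{1\}$ enters: we want loxodromic WPD elements $h$ with $E(h)=\langle h\rangle$. In general $E(g)$ has a maximal finite normal subgroup $K_g$, and $E(g)$ is an extension of $K_g$ by $\ZZ$ or by $D_\infty$. The intersection of all $K_g$ over loxodromic $g$ is normal and finite, hence contained in $K(G)$, hence trivial. I would try to kill the finite part by small-cancellation type products. Fix two non-commensurable loxodromic elements $a,b$, and consider elements of the form $h=a^{N}fb^{N}$ for suitable $f\in G$ and large $N$. Such an $h$ is loxodromic, and its axis fellow-travels long segments of the axes of $a$ and $b$. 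By acylindricity, any element of $E(h)$ moving a long stretch of this axis a bounded amount must lie in $K_a\cap K_b$, up to bounded ambiguity. Choosing $a,b$ (and $f$) so that $K_a\cap K_b$ is trivial, which is possible because the $K_g$ have trivial common intersection, forces $E(h)$ to be torsion-free. Choosing $h$ so that it is not conjugate to its inverse rules out the $D_\infty$ case, so $E(h)$ is infinite cyclic. Finally, replacing $h$ by a generator of $E(h)$ gives $E(h)=\langle h\rangle$.

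\emph{Step 3.} Run Step 2 with varying parameters, for instance $h_k=a^{N_k}fb^{N_k}$ for an increasing sequence $N_k$, or with different $f$. This yields $h_1,\ldots,h_m$ with $E(h_i)=\langle h_i\rangle$. Pairwise non-commensurability follows because the translation lengths, or the coarse periodic patterns along the axes, differ; by acylindricity, commensurable elements must have matching axis patterns. Applying \cite[Theorem 6.8]{DGO} to $\{E(h_1),\ldots,E(h_m)\}$ then gives $\{\langle h_1\rangle,\ldots,\langle h_m\rangle\}\hookrightarrow_h(G,X)$.

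I expect the main obstacle to be the control of $E(h)$ in Step 2. This requires a careful acylindricity and fellow-travelling estimate, which is exactly the content of \cite[Lemma 6.18]{DGO}; in writing the proof I would simply cite that lemma for this step.
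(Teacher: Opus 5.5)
Your proposal is essentially the paper's argument. The paper gives no separate proof and simply quotes the statement as a particular case of \cite[Lemma 6.18]{DGO}; your plan rests on that same citation, together with \cite[Theorem 6.8]{DGO} to pass from pairwise non-commensurable loxodromic WPD elements with $E(h_i)=\langle h_i\rangle$ to $\{\langle h_1\rangle,\ldots,\langle h_m\rangle\}\hookrightarrow_h(G,X)$. Since that lemma already supplies the whole non-commensurable family when $K(G)=\{1\}$, you can replace your heuristic sketches in Steps~2 and~3 by the citation: the choice of $a,b$ with $K_a\cap K_b=\{1\}$, the exclusion of the dihedral case, and the comparison of axis patterns are all unnecessary.
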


\paragraph{3.2. Proof of $\PP(2)$ for acylindrically hyperbolic groups.} For finitely generated groups, part (a) of Theorem \ref{Thm:AH} follows immediately from part (b) and the known fact that finitely generated acylindrically hyperbolic groups contain strongly quasi-convex, non-cyclic, free subgroups. For infinitely generated groups, the idea of pulling the paradoxical structure from free subgroups used in the proof of part (b) given below still works after small modification (specifically, the projection provided by \cite[Lemma 4.5]{AHO} should be used in place of the map constructed in Proposition \ref{Prop:contr}). However, we choose to give a direct and significantly simpler proof of part (a) of Theorem \ref{Thm:AH}, which is inspired by the proof of inner non-amenability of acylindrically hyperbolic groups in \cite[Theorem~8.14]{DGO}. 

We first deal with a special case; although the same idea works in the general case as well, the triviality of the finite radical makes the proof somewhat cleaner. 

\begin{figure}
\centering\scalebox{.9}{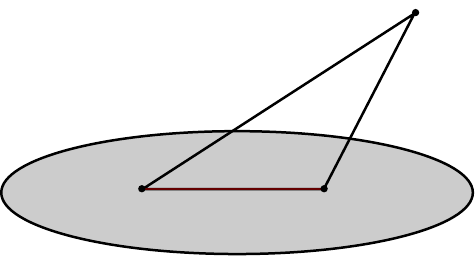}\\
  \caption{Bounding the $\d_i$-distance between nearest points}\label{Fig2}
\end{figure}

\begin{lem}\label{lem:AH}
Every acylindrically hyperbolic group with trivial finite radical satisfies $\PP(2)$.
\end{lem}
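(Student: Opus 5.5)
Our plan is to mimic Example \ref{Ex:Free}(b), using Lemma \ref{Lem:HE} with $m=2$. This gives infinite order elements $h_1,h_2$ and $X\subseteq G$ with $\{H_1,H_2\}\hookrightarrow_h(G,X)$, where $H_i=\langle h_i\rangle$. Set $\Gamma=\CGHX$ and let $\d_i$ be the relative metrics from Definition \ref{Def:HE}.

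The sets are defined through a ``nearest point projection'' to the cosets $H_i$. For $x\in G$, take a geodesic in $\Gamma$ from $1$ to $x$, and look at whether its first edge is an $H_i$-edge labelled by $h_i^k$. The difficulty is that this depends on the choice of geodesic. So instead we define the projection $\pi_i(x)\subseteq H_i$ as the set of $h\in H_i$ such that some geodesic from $1$ to $x$ passes through $h$ via an $H_i$-component starting at $1$. Then we take
\[
B_i=\{x : \pi_i(x)\text{ meets } h_i^{k}\text{ with } k\ge K\},\qquad A_i=\{x : \pi_i(x)\subseteq\{h_i^k : k\ge 3K\}\},
\]
possibly also using negative exponents, and $g_i=h_i^{-2K}$, where $K$ is a large constant.

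The key estimate, suggested by Fig.~\ref{Fig2}, is this. Take two geodesics from the same point to $x$ that leave the coset $H_i$ at $h$ and at $h'$. They form a geodesic polygon with a bounded number of sides in which the $H_i$-component from $h$ to $h'$ is isolated, unless some other component is connected to it. Lemma \ref{Lem:Omega} then gives $\d_i(h,h')\le C'$. Combined with local finiteness (condition (b) of Definition \ref{Def:HE}), the projection $\pi_i(x)$ has uniformly bounded $|k|$-diameter, with the bound depending only on $C$. We choose $K$ much larger than this diameter bound.

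With this in hand we verify the two conditions.

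\textbf{Inequality (\ref{Eq:PP}).} We must show that every $x$ lies in strictly more of the sets $g_iA_i$ than of the sets $B_i$. If $x\in B_1$, then the projection of $x$ to $H_1$ sits at large positive exponent $k$. Hence $h_1^{2K}x$ has projection near $k+2K\ge 3K$, since left-multiplying by a coset element shifts the projection in the tree-like fashion given by the isolated-component lemma. So $x\in g_1A_1$. Moreover, $x\in B_1$ should force the projection of $x$ to $H_2$ to be near $1$. Then $h_2^{2K}x$ projects near $h_2^{2K}$, so $x\in g_2A_2$, while $x\notin B_2$. This gives $2\ge 1+1$. A point lying in neither $B_i$ must still lie in some $g_iA_i$; the same shifting argument should give this. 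The signs and exponent thresholds will be arranged as in Example \ref{Ex:Free}(b), where the sets of words beginning with $a^2$ and with $a$ are replaced by the projection conditions above.

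\textbf{Admissibility.} Suppose $x\in A_i$ but $xg\notin B_i$. Then the projections of $x$ and $xg$ to $H_i$ differ by at least $2K$ in exponent. A geodesic quadrilateral through $1$, $x$, $xg$ and the coset $H_i$ then shows that the component from the projection point must be nearly traversed by the geodesic $[x,xg]$, which has length $|g|_{X\cup\mathcal H}$. This forces $x$ to lie within bounded distance of $H_i$, and to be at controlled position relative to it. Local finiteness of $\d_i$ then gives finitely many such $x$.

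The main obstacle is this admissibility step, together with making the projection well defined enough that the bookkeeping in (\ref{Eq:PP}) works. Since $\Gamma$ is not locally finite, ``bounded distance'' does not by itself give finiteness. We therefore expect to need the finer formulation through $\d_i$: the relevant $H_i$-component of $[x,xg]$ must have bounded $\d_i$-length, and is forced to coincide with the segment of $H_i$ near $1$. That pins $x$ down to finitely many choices.
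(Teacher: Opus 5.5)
Your overall strategy (project onto two hyperbolically embedded cyclic subgroups and imitate Example~\ref{Ex:Free}(b)) is the paper's strategy. However, the proposal has genuine gaps, and with your stated parameters inequality (\ref{Eq:PP}) actually fails.

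\emph{The projection.} Your projection is defined through geodesics issuing from the base point $1$, so it is not $H_i$-equivariant: left multiplication by $h_i^m$ moves the base point. The ``shifting'' you invoke is therefore unjustified. Read literally, it is also empty at $x=1$, which puts $1\in A_i\setminus B_i$. You need the genuine nearest-point projection $\pi_i(g)=\max\{n\mid \d(h_i^n,g)=\d(H_i,g)\}$. It is well defined by exactly your Fig.~\ref{Fig2} estimate, and it satisfies $\pi_i(h_i^mg)=m+\pi_i(g)$ exactly.

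\emph{The covering part of (\ref{Eq:PP}).} This is not a shifting argument. The missing key estimate is $\min\{|\pi_1(g)|,|\pi_2(g)|\}\le K$ for all $g\in G$. A geodesic $p$ from $1$ to $g$ cannot begin with both an $H_1$-edge and an $H_2$-edge. If it does not begin with an $H_i$-edge, then the $H_i$-edge from $1$ to $h_i^{\pi_i(g)}$ is isolated in the triangle it forms with $p$ and a geodesic from $h_i^{\pi_i(g)}$ to $g$, and Lemma~\ref{Lem:Omega} bounds $|\pi_i(g)|$. The translation must then overshoot the threshold of $A_i$ by this margin, so that $g_iA_i$ contains every point whose $i$-th projection is near $1$. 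For example, with $A_i=\{\pi_i\ge 3K\}$ and $B_i=\{\pi_i\ge 2K\}$, take $g_i=h_i^{-4K}$, so that $g_iA_i=\{\pi_i\ge -K\}$. With your $g_i=h_i^{-2K}$, $g_iA_i$ is roughly $\{\pi_i\ge K\}$. In particular $1\notin g_1A_1\cup g_2A_2$, since $\pi_i(h_i^{2K})\ni h_i^{2K}$ and $2K<3K$. Your step ``$h_2^{2K}x$ projects near $h_2^{2K}$, hence $x\in g_2A_2$'' is false for the same reason. Your $B_i$-threshold $K$ is also too low: it must exceed the bound in the min-estimate to force $B_1\cap B_2=\emptyset$.

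\emph{Admissibility.} This is left open, and the mechanism you suggest (an $H_i$-component of $[x,xg]$ coinciding with a segment of $H_i$ near $1$) is not what happens. What works is the following.
\begin{itemize}
\item Fix \emph{one} geodesic $r$ from $1$ to $f$, with finite vertex set $V$. You cannot let the geodesic $[x,xf]$ vary, since $\CGHX$ is not locally finite.
\item Suppose $g\in A_i$ and $gf\notin B_i$. Form the geodesic quadrilateral with vertices $h_i^{\pi_i(g)}$, $g$, $gf$, $h_i^{\pi_i(gf)}$, using the translate $gr$ as one side and an $H_i$-edge $e$ as another.
\item The sides issuing from the two projection points meet $H_i$ only at their initial vertices. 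So if $gr$ had no vertex in $H_i$, $e$ would be isolated. Lemma~\ref{Lem:Omega} would then give $|\pi_i(g)-\pi_i(gf)|\le K$, where $K$ is chosen so that $d_i(1,h_i^m)\le 4C$ implies $|m|<K$. This contradicts the thresholds.
\item Hence $gv\in H_i$ for some $v\in V$, i.e.\ $g=h_i^mv^{-1}$. Equivariance turns the threshold conditions into $3K-\pi_i(v^{-1})\le m<2K-\pi_i(v^{-1}f)$, leaving finitely many $g$.
\end{itemize}

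Both this count and the computation of $g_iA_i$ rest on $H_i$-equivariance of the projection, at least up to bounded error. So the base-point-dependent projection is not a cosmetic issue. To use it, you would first have to compare it with the nearest-point projection via the same isolated-component arguments, which amounts to switching to the latter.
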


\begin{proof}
By Lemma \ref{Lem:HE}, there exist infinite cyclic subgroups $H_i=\langle h_i\rangle$, $i=1,2$, and a subset $X\subseteq G$ such that $\{H_1,H_2\}\hookrightarrow_h(G,X)$. Let $\d$ denote the word metric on $G$ with respect to the generating set $X\cup H_1\cup H_2$ and let $d_i$, $i=1,2$, be the metric on $H_i$ defined above. Let also $C$ be the constant provided by Lemma \ref{Lem:Omega} and let 
$$
K=\max \{ |m| \mid \exists \, i\in \{ 1,2\}\; {\rm such \; that\;}  d_i(1,h_i^m)\le 4C \} +1.
$$
Note that the maximum exists by Definition \ref{Def:HE} (b). The only purpose of adding $1$ is to ensure that $K$ is positive, which is necessary to avoid degeneracy in the construction below. 

Consider any $g\in G$. A geodesic from a nearest point of $H_i$ to $g$ in $\CGHX$ has no vertices in $H_i$ other than the initial vertex. If $h_i^r,h_i^s$ are distinct nearest points of $H_i$ to $g$, join both of them to $g$ by geodesics $p$ and $q$ in $\CGHX$ and connect them by an $H_i$-edge $e$ (see Fig. \ref{Fig2}). Since $e$ is isolated in the geodesic triangle with sides $p$, $q$, and $e$, we obtain $d_i(1,h_i^{r-s})\le 3C$ by Lemma~\ref{Lem:Omega}; consequently, $|r-s|\le K$. Thus the set of nearest points of $H_i$ to $g$ is finite. We can therefore define
$$
\pi_i(g)=\max\{n\in\ZZ\mid d(h_i^n,g)=d(H_i,g)\}.
$$
Left multiplication by $h_i^m$ preserves distances in $\CGHX$ and preserves $H_i$ setwise. Therefore,
\begin{equation}\label{Eq:pii}
\pi_i(h_i^m g)=m+\pi_i(g)
\qquad \forall \, m\in\ZZ\ \; \forall\, g\in G.
\end{equation}

We claim that 
\begin{equation}\label{Eq:min}
\min\{|\pi_1(g)|,|\pi_2(g)|\}\le K,\qquad \forall\, g\in G.
\end{equation} 
Indeed, fix a geodesic $p$ from $1$ to $g$. There exists $i\in \{ 1,2\}$ such that $p$ does not begin with an $H_i$-edge; since $p$ is geodesic, it has no $H_i$-component with vertices in the coset $H_i$. Let $a=h_i^{\pi_i(g)}$. We join $a$ to $g$ by a geodesic $q$ in $\CGHX$. If $a=1$, we have $\pi_i(g)=0$ and (\ref{Eq:min}) obviously holds. If $a\ne 1$, the $H_i$-edge $e$ from $1$ to $a$ is isolated in the geodesic triangle $eqp^{-1}$. Therefore $d_i(1,a)\le 3C$ by Lemma \ref{Lem:Omega} and $|\pi_i(g)|\le K$ by the choice of $K$, and (\ref{Eq:min}) follows.

\begin{figure}
\centering\scalebox{.9}{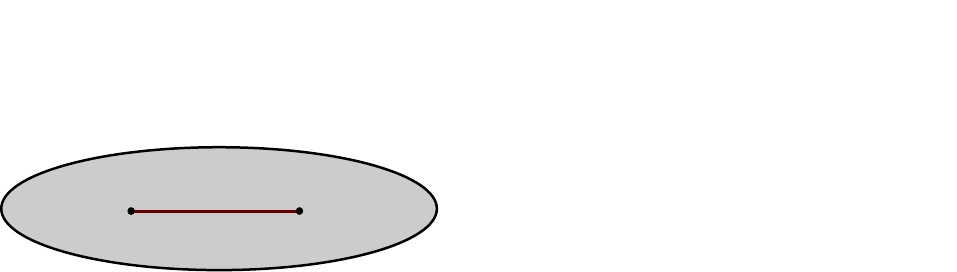}\\
  \caption{Two cases in the proof of admissibility of $A_i\subseteq B_i$}\label{Fig3}
\end{figure}

For $i=1,2$, we let
$$
A_i=\{g\in G\mid \pi_i(g)\ge3K\}\qquad {\rm and}\qquad B_i=\{g\in G\mid \pi_i(g)\ge2K\}.
$$
Clearly, $A_i\subseteq B_i$. We will first show that these pairs are admissible. To this end, fix $f\in G$, choose a geodesic $r$ from $1$ to $f$, and let $V$ denote its (finite) set of vertices. Suppose that $g\in A_i$ and $gf\notin B_i$. Then $\pi_i(g)\ge 3K$ and $\pi_i(gf)<2K$, so $\pi_i(g)-\pi_i(gf)>K$. Let $a=h_i^{\pi_i(g)}$ and $b=h_i^{\pi_i(gf)}$. Consider the geodesic quadrilateral with vertices $a$, $g$, $gf$, $b$, using the translate $gr$ as the side from $g$ to $gf$ and an $H_i$-edge as the side connecting $b$ to $a$. The sides from $a$ to $g$ and from $b$ to $gf$ have no vertices in $H_i$ other than $a$ and $b$, since $a$ and $b$ are nearest points of $H_i$ to $g$ and $gf$, respectively. If $gr$ had no vertex in $H_i$, the edge from $b$ to $a$ would be isolated. This would give $d_i(1,b^{-1}a)\le 4C$ and thus $|\pi_i(g)-\pi_i(gf)|\le K$, a contradiction. Therefore $gv\in H_i$ for some $v\in V$, and we have $g=h_i^m v^{-1}$ for some $m\in \ZZ$. By (\ref{Eq:pii}), the assumptions $g\in A_i$ and $gf\notin B_i$ imply
$$
3K-\pi_i(v^{-1})\le m<2K-\pi_i(v^{-1}f).
$$
For every fixed $v\in V$, only finitely many integers $m$ satisfy these inequalities. Since $V$ is finite, there are only finitely many $g\in A_i$ such that $gf\notin B_i$. Thus $A_if\setminus B_i$ is finite.

Finally, let $g_i=h_i^{-4K}$ for $i=1,2$. Using (\ref{Eq:pii}) again, we obtain  $$g_iA_i=\{g\in G\mid \pi_i(g)\ge-K\}.$$ The inequality (\ref{Eq:min}) implies $G=g_1A_1\cup g_2A_2$ and $B_1\cap B_2=\emptyset$. If $g\in B_i$, then $\pi_i(g)\ge2K$ and $|\pi_j(g)|\le K$ for $j\ne i$, so $g\in g_1A_1\cap g_2A_2$. Therefore
$$
\mathbf1_{g_1A_1}+\mathbf1_{g_2A_2} \ge1+\mathbf1_{B_1}+\mathbf1_{B_2}.
$$
Together with admissibility, this proves $\PP(2)$.
\end{proof}

\begin{proof}[Proof of Theorem \ref{Thm:AH} (a)]
Let $G$ be an acylindrically hyperbolic group. It is well-known and easy to prove that $G/K(G)$, where $K(G)$ is the finite radical of $G$, is acylindrically hyperbolic and has trivial finite radical (see, for example, \cite[Lemma~5.10]{Hull}). Let $\overline{A_i}\subseteq \overline{B_i}\subseteq G/K(G)$ and $\overline{g_i}\in G/K(G)$, $i=1,2$, be the admissible pairs and elements witnessing property $\PP(2)$ in $G/K(G)$. Let $A_i\subseteq B_i$ be the full preimages of $\overline{A_i}\subseteq \overline{B_i}$, and let $g_i$ be any preimage of $\overline{g_i}$ in $G$. The inverse images of the  admissible pairs are admissible since $K(G)$ is finite, and the inequality (\ref{Eq:PP}) is preserved under taking full preimages. Thus, $G$ satisfies $\PP(2)$.
\end{proof}

\paragraph{3.3. Strongly quasi-convex subgroups from ping-pong.} Given a (combinatorial) path $p$ in a graph $\Gamma$, we denote by $\ell(p)$ its length and by $p_-,p_+$ its endpoints. Let $\lambda\ge 1$ and $c\ge 0$. Recall that a (rectifiable) path $p$ in a metric space $(S,\d)$ is a \textit{$(\lambda,c)$-quasi-geodesic} if every subpath $q$ of $p$ 
satisfies $$\ell(q)\le \lambda \d(q_-,q_+)+c.$$ 
A subgroup $H$ of a finitely generated group $G$ is
\textit{strongly quasi-convex} if, for every $\lambda\ge 1$ and $c\ge 0$, there exists $R\ge 0$ such that every $(\lambda,c)$-quasi-geodesic in the Cayley graph of $G$ with endpoints in $H$ lies in the
$R$-neighborhood of $H$. This property is independent of the choice of a finite generating set of $G$ \cite{Tra}.

The following strengthening of the standard ping-pong lemma will be used in the proof of Theorem \ref{Thm:AH} (b) and appears to be of independent interest.

\begin{prop}\label{Prop:ping}
Let $G$ be a finitely generated group, $r\ge 1$ an integer, $x_1,\ldots,x_r\in G$. Suppose that there are
subsets $D_a\subseteq G$ indexed by letters in the formal alphabet
$\mathcal A=\{x_1^{\pm1},\ldots,x_r^{\pm1}\}$ such that the following conditions hold.
\begin{enumerate}
\item[(a)] The sets $D_a$, $a\in\mathcal A$, are pairwise disjoint and
$G\setminus\bigcup_{a\in\mathcal A}D_a$ is nonempty.
\item[(b)] $a(G\setminus D_{a^{-1}})=D_a$ for every $a\in\mathcal A$.
\item[(c)] $D_ag\cap D_b$ is finite for all $g\in G$ and distinct
$a,b\in\mathcal A$.
\end{enumerate}
Then $\{x_1,\ldots,x_r\}$ is a basis of a strongly quasi-convex free subgroup of $G$.
\end{prop}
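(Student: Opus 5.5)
The plan is to follow the classical ping-pong argument for freeness and then upgrade it by a coarse separation argument based on condition (c). Fix a base point $o\in G\setminus\bigcup_{a\in\mathcal A}D_a$, which exists by (a). For a nontrivial reduced word $w=a_1\cdots a_k$ over $\mathcal A$, induction on $k$ using (b) and (a) gives $w\,o\in D_{a_1}$. Indeed, if $a_2\cdots a_k o\in D_{a_2}$, then since $a_2\ne a_1^{-1}$ this point lies in $G\setminus D_{a_1^{-1}}$, and applying $a_1$ lands in $D_{a_1}$. Hence $w\,o\ne o$, so $w\ne 1$ in $G$, and $\{x_1,\ldots,x_r\}$ is a basis of a free subgroup $H$. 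More precisely, with $w_j=a_1\cdots a_j$, we get a nested family of \emph{cones} $w_jD_{a_{j+1}}=w_{j+1}(G\setminus D_{a_{j+1}^{-1}})$. Each cone contains $w\,o$. Each cone also contains $w_{j'}o$ for $j'>j$ but not for $j'\le j$. Since $Ho$ is at bounded distance from $H$ (left multiplication by elements of $H$ moves $o$ by a fixed distance), it suffices to show that every $(\lambda,c)$-quasi-geodesic from $o$ to $w\,o$ stays uniformly close to the points $w_jo$.

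Next I will use (c) to get coarse separation. Let $S$ be a finite symmetric generating set and $B_R$ the ball of radius $R$. Condition (c), applied to the finitely many $g\in B_{2R}$, shows that
$$F_R=\bigcup_{a\ne b}\big(D_aB_R\cap D_bB_R\big)$$
is finite for every $R$. Left translations are isometries of the Cayley graph, so all statements below can be moved to the identity by multiplying by $w_j^{-1}$. Consider a quasi-geodesic $p$ from $o$ to $w\,o$. For each $j$ with $1\le j<k$, the path $w_j^{-1}p$ starts at $w_j^{-1}o\in D_{a_j^{-1}}$ and ends at $a_{j+1}\cdots a_ko\in D_{a_{j+1}}$. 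Since $a_{j+1}\ne a_j^{-1}$, these are two distinct, pairwise coarsely separated pieces. If the path passes directly from $D_{a_j^{-1}}$ into $D_{a_{j+1}}$, some edge has endpoints in both $D_{a_j^{-1}}S$ and $D_{a_{j+1}}S$, hence lies in the finite set $F_1$. So $p$ meets $w_jF_1$, which is at bounded distance from $w_jo$.

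The main obstacle is that $p$ may instead travel through the complement $C=G\setminus\bigcup_aD_a$, which need not be bounded; neither (c) nor (b) directly controls the frontier between $D_a$ and $C$. To handle this, I will exploit that the translates $hC$, $h\in H$, are pairwise disjoint: for $h\ne 1$ with first letter $a$ we have $hC\subseteq D_a$. Moreover, every point of $hC$ lies in exactly the cones determined by the prefixes of $h$. The first thing to try is a contradiction-and-compactness argument. Suppose quasi-geodesics $p_n$ with endpoints in $Ho$ have points $y_n$ at distance $\ge n$ from $Ho$. Translate by an element of $H$ so that $y_n$ lies in the region $C\cup\bigcup_a D_a$ adjacent to $o$, then use the quasi-geodesic inequality. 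The subpath of $p_n$ around $y_n$ must both enter and leave the region $hC$ containing $y_n$ (or the corresponding cone boundary), and the entry and exit cones are distinct. Using $F_R$ with $R$ comparable to $\lambda,c$, I would aim to show that any point of $C$ at distance $\ge R'$ from $o$ lies in at most one of the sets $D_aB_{R}$. Consequently, a quasi-geodesic segment entering $C$ from $D_a$ and leaving into $D_b$ with $b\ne a$ must pass within $R'$ of $o$. Establishing this uniform bound, that is, controlling the frontier of $D_a$ inside $C$, is where I expect the real work to lie. Once it holds, every point of $p$ is within a uniform distance of some $w_jo$ or of a crossing point near $w_jF_R$. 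Since the separating crossings occur near consecutive $w_jo$, the quasi-geodesic inequality bounds how far $p$ can wander between consecutive crossings, and this gives the required $R$-neighborhood bound and hence strong quasi-convexity of $H$.
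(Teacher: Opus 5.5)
\textbf{There is a genuine gap: the proposal does not control excursions of the quasi-geodesic through the deep part of $C$, and the route you sketch for this cannot work.}

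What is fine: your ping-pong argument for freeness, the finiteness of $F_R$, and the observation that a \emph{direct} passage of $w_j^{-1}p$ from $D_{a_j^{-1}}$ into $D_{a_{j+1}}$ happens inside $F_1$.

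Where it fails: the claim that a point of $C$ far from $o$ lies in at most one set $D_aB_R$ is immediate from finiteness of $F_R$. But the ``consequently'' does not follow. Points deep inside $C$ lie in \emph{none} of the sets $D_aB_R$, so a path can leave $D_aB_R$, run through the deep part of $C$, and reach $D_bB_R$ without coming near $o$.

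In fact the claim is false for general quasi-geodesic segments. Take $G=F(a,b)*\langle s,t\rangle$ with $\langle s,t\rangle\cong\ZZ^2$. Let $f\in F(a,b)$ and $z\in\langle s,t\rangle$ be the leading syllables of the normal form of $g$ (either may be trivial), and set $\pi(g)=f\psi(z)$. Here $\psi(s^it^j)$ is $a^{-1}$ if $i,j>0$, is $b^{-1}$ if $i,j<0$, and is $1$ otherwise. Put $D_c=\{g\mid \pi(g)\text{ begins with } c\}$. Conditions (a)--(c) hold. Yet the geodesic $s^Nt^N\to s^{-N}t^N\to s^{-N}t^{-N}$ in the flat enters $C$ from $D_{a^{-1}}$ and leaves into $D_{b^{-1}}$, while staying at distance at least $N$ from $1$. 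So no local analysis of the frontier of $C$ suffices; you must use that the whole quasi-geodesic has endpoints in $H$.

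Your compactness sketch also assumes that every point lies in some $hC$. This is true, but it needs (c): a priori a point could lie in an infinite nested sequence of cones.

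The missing idea is a global ``slow projection'' estimate, which is how the paper argues.
\begin{enumerate}
\item Replace each $D_a$ by $D_ao^{-1}$, which preserves (a)--(c), so that $o=1$. Let $E_g=\bigcup_{a\ne b}(D_a\cap D_bg^{-1})$, which is finite by (c).
\item Show $G=\bigsqcup_{h\in H}hC$. If $x\notin HC$, following the nested cones containing $x$ produces infinitely many distinct elements $a_n^{-1}\cdots a_1^{-1}\in D_{a_n^{-1}}\cap D_{a_{n+1}}x^{-1}\subseteq E_x$, a contradiction.
\item Define the retraction $\pi(hq)=h$ for $q\in C$. If $v$ is an interior vertex of the tree geodesic from $\pi(x)$ to $\pi(xg)$, then $v^{-1}x\in E_g$. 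Hence $\pi$ is $L$-Lipschitz. Moreover, if $d(x,H)>D_R$ and $d(x,y)\le R$, then $\pi(x)$ and $\pi(y)$ are equal or adjacent. Crossings from $C$ into $aC$ far from $H$ (the ones worrying you) are not excluded; they are only shown to be slow.
\item Choose $R>2\lambda$ and $\rho>D_R$. Take a maximal subpath $q$ of a $(\lambda,c)$-quasi-geodesic lying outside the $\rho$-neighbourhood of $H$. Cutting $q$ into pieces of length at most $R$ shows that $\pi(q_-)$ and $\pi(q_+)$ are at distance at most $\ell(q)/R+1$ in $H$. Also $d(q_\pm,\pi(q_\pm))\le(L+1)\rho$.
\item The quasi-geodesic inequality then gives $\ell(q)\le\lambda\bigl(2(L+1)\rho+\ell(q)/R+1\bigr)+c$. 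This is a uniform bound on $\ell(q)$, which yields strong quasi-convexity.
\end{enumerate}
This is the step your proposal needs and does not supply.
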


We divide the proof into three lemmas, all of which are stated under the assumptions of Proposition \ref{Prop:ping}. Let $F=\la x_1,\ldots,x_r\ra$ and $Q=G\setminus\bigcup_{a\in\mathcal A}D_a$. Replacing every $D_a$ with $D_aq^{-1}$ for a fixed $q\in Q$ preserves conditions (a)--(c). Thus, we can assume that $1\in Q$ without loss of generality. For $g\in G$, let
$$
E_g=\bigcup_{\substack{a,b\in\mathcal A\\a\ne b}}(D_a\cap D_bg^{-1}).
$$
These sets are finite by (c).

\begin{lem}\label{Lem:FQ}
Elements $x_1,\ldots,x_r$ form a basis of a free subgroup $F\le G$ and
\begin{equation}\label{Eq:GU}
G=\bigsqcup\limits_{f\in F}fQ.
\end{equation}
\end{lem}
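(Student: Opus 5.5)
This is the standard ping-pong argument, run so that it keeps track of the fundamental domain $Q$. Throughout we assume, as arranged above, that $1\in Q$. We only need conditions (a) and (b); condition (c) plays no role here.

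\emph{Step 1: reduced words map $Q$ into the right set.} Let $w=a_1a_2\cdots a_n$ be a nonempty freely reduced word over $\mathcal A$. We claim that
$$wQ\subseteq D_{a_1} \qquad\text{and, more strongly,}\qquad w\bigl(G\setminus D_{a_n^{-1}}\bigr)\subseteq D_{a_1}.$$
We argue by induction on $n$. The case $n=1$ is exactly condition (b). For the inductive step, write $w=a_1w'$ with $w'$ reduced and beginning with $a_2\ne a_1^{-1}$. By induction, $w'(G\setminus D_{a_n^{-1}})\subseteq D_{a_2}$. Since $a_2\ne a_1^{-1}$, disjointness in (a) gives $D_{a_2}\subseteq G\setminus D_{a_1^{-1}}$. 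Applying (b) then gives $a_1D_{a_2}\subseteq D_{a_1}$. Finally, $Q\subseteq G\setminus D_{a_n^{-1}}$, so $wQ\subseteq D_{a_1}$.

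\emph{Step 2: freeness and disjointness.} By Step 1, a nonempty reduced word $w$ sends $1\in Q$ into $D_{a_1}$, which is disjoint from $Q$. Hence $w\ne 1$ in $G$. So $x_1,\ldots,x_r$ form a basis of a free group $F$, and each nontrivial $f\in F$ satisfies $fQ\cap Q=\emptyset$. Consequently $fQ\cap f'Q=f(Q\cap f^{-1}f'Q)=\emptyset$ whenever $f\ne f'$, which shows the union in (\ref{Eq:GU}) is disjoint.

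\emph{Step 3: covering.} We must show every $g\in G$ lies in $fQ$ for some $f\in F$. I expect this to be the main obstacle, because a naive descent ``if $g\in D_a$, replace $g$ by $a^{-1}g$'' need not terminate without some notion of depth.

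We get depth from the structure forced by (b). Condition (b) yields $a^{-1}D_a=G\setminus D_{a^{-1}}$. Decompose
$$G\setminus D_{a^{-1}}=Q\ \sqcup \bigsqcup_{b\ne a^{-1}}D_b.$$
Applying $a$ gives the partition
$$D_a=aQ\ \sqcup\bigsqcup_{b\ne a^{-1}}aD_b.$$
Iterating $n$ times, $D_{a_1}$ is partitioned into the sets $wQ$, for reduced words $w$ of length at most $n$ beginning with $a_1$, together with the sets $wD_{b}$, for reduced words $w$ of length exactly $n$ beginning with $a_1$ and letters $b$ with $wb$ reduced.

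So suppose $g\notin\bigcup_{f\in F}fQ$. Then $g\notin Q$, so $g\in D_{a_1}$ for some $a_1$. Since $g\notin a_1Q$, we get $a_1^{-1}g\in D_{a_2}$ with $a_2\ne a_1^{-1}$. Continuing, we obtain an infinite reduced word $a_1a_2\cdots$ with $(a_1\cdots a_n)^{-1}g\in D_{a_{n+1}}$ for every $n$.

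Purely set-theoretic axioms do not exclude such $g$, so we must bring in the group structure. Use the element $h=(a_1\cdots a_n)^{-1}g$, which lies in $D_{a_{n+1}}$. Equivalently, $g^{-1}a_1\cdots a_n$ lies in $g^{-1}D_{a_1}$ for all $n$ while $a_1\cdots a_n\to\infty$. The plan is then to reverse the roles: consider $g^{-1}\in G$ and trace where $g^{-1}\cdot 1$ sits. Specifically, $g^{-1}$ lies in some $fQ$ or in a $D$-set, and applying the Step 1 inclusions to $g^{-1}$ and $g$ simultaneously should force a contradiction. Concretely, apply Step 1 to $w=(a_1\cdots a_n)^{-1}$ acting on $g\in D_{a_1}$. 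If instead this symmetry trick fails, I would fall back on using (c) with the finite sets $E_g$: the points $(a_1\cdots a_n)^{-1}g$ would lie in $D_{a_{n+1}}\cap D_{a_n^{-1}}\,g'$-type intersections for a fixed right translate, and infinitely many distinct such points would contradict the finiteness of $E_g$.
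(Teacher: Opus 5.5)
Your Steps 1 and 2 are correct and match the paper. Step 3 has a genuine gap, and your opening claim that (c) plays no role is false. The covering $G=FQ$ cannot be derived from (a) and (b), even using the group structure. Here is a counterexample. Take $G=\ZZ^2$, $r=1$, $x_1=t=(1,0)$, and set $Q=\{(0,0)\}$, $D_{t^{-1}}=\{(m,0)\mid m\le -1\}$, and $D_t=\{(m,0)\mid m\ge 1\}\cup\{(m,n)\mid n\ne 0\}$. Conditions (a) and (b) hold, yet $FQ$ is only the $x$-axis. Condition (c) fails, since $D_t(0,-1)\supseteq D_{t^{-1}}$. So your ``symmetry trick'' with $g^{-1}$ cannot succeed. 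In this example $g=(0,1)$, $g^{-1}$, and all $t^{-k}g$ lie in $D_t$, so inspecting $g^{-1}$ yields nothing. The contradiction has to come from (c).

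Your fallback is the right idea, and it is essentially the paper's argument, but you only gesture at it with ``$g'$-type intersections'' and never carry it out. The missing verification is as follows. Put $f_n=(a_1\cdots a_n)^{-1}$, which is represented by the reduced word $a_n^{-1}\cdots a_1^{-1}$. Step 1 together with $1\in Q$ gives $f_n\in f_nQ\subseteq D_{a_n^{-1}}$. Your construction gives $f_ng\in D_{a_{n+1}}$ with $a_{n+1}\ne a_n^{-1}$. Hence $f_n\in D_{a_n^{-1}}\cap D_{a_{n+1}}g^{-1}\subseteq E_g$. Equivalently, $f_ng\in D_{a_n^{-1}}g\cap D_{a_{n+1}}$, so your unspecified right translate is $g$ itself. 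The elements $f_n$ are pairwise distinct because $F$ is free on $x_1,\dots,x_r$ by Step 2. This puts infinitely many elements in the set $E_g$, which is finite by (c), a contradiction.
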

\begin{proof}
If $w=a_1\cdots a_n$ is a nonempty reduced word, induction on $n$ using (b) gives $wQ\subseteq D_{a_1}$. In particular, $w\ne1$ in $G$. Therefore, $x_1,\ldots,x_r$ freely generate a subgroup $F$, and the translates $fQ$, $f\in F$, are pairwise disjoint.

It remains to show that $G=FQ$. Arguing by contradiction, suppose that there exists $x\in G\setminus FQ$. Let $f_0=1$. We choose letters $a_1,a_2,\ldots$ inductively so that $a_{n+1}\ne a_n^{-1}$ and $f_nx\in D_{a_{n+1}}$, where $f_n=a_n^{-1}\cdots a_1^{-1}$. Since $x\notin Q$, we can choose $a_1$ with $x\in D_{a_1}$. Suppose that $a_1,\ldots,a_n$ have been chosen. By (b), we have $f_nx=a_n^{-1}f_{n-1}x\notin D_{a_n^{-1}}$. Also, $f_nx\notin Q$ since $x\notin FQ$. Hence, there exists $a_{n+1}\ne a_n^{-1}$ such that $f_nx\in D_{a_{n+1}}$. On the other hand, our assumption $1\in Q$ implies $f_n\in D_{a_n^{-1}}$. Thus, the pairwise distinct elements $f_n$, $n\ge1$, all belong to $E_x$, which contradicts finiteness of $E_x$.
\end{proof}

Let $\d_F$ be the word metric corresponding to the given basis of $F$, and let $\d_G$ be the word metric corresponding to a finite symmetric generating set $S$ of $G$ containing $x_1,\ldots,x_r$. We define $\pi\colon G\to F$ by letting $\pi(fq)=f$ for all $f\in F$ and $q\in Q$.

\begin{lem}\label{Lem:pi}
Let $\pi\colon G\to F$ be the map defined above.
\begin{enumerate}
\item[(a)] $\pi$ is an $F$-equivariant Lipschitz retraction.
\item[(b)] For every $R>0$, there exists $D_R>0$ such that for any elements $x,y\in G$ satisfying $\d_G(x,F)>D_R$ and $\d_G(x,y)\le R$, we have $\d_F(\pi(x),\pi(y))\le 1$.
\end{enumerate}
\end{lem}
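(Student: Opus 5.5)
Part (a) is a direct check, and part (b) comes from a translation argument using the finite sets $E_g$.

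\emph{Equivariance and retraction.} By Lemma \ref{Lem:FQ}, every $x\in G$ decomposes uniquely as $x=fq$ with $f\in F$, $q\in Q$, so $\pi$ is well defined. For $f'\in F$ we have $f'x=(f'f)q$, hence $\pi(f'x)=f'\pi(x)$. Since $1\in Q$, we get $\pi(f)=f$ for $f\in F$, so $\pi$ is a retraction.

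\emph{Lipschitz.} It suffices to show that $\d_F(\pi(x),\pi(xs))$ is bounded uniformly for $s\in S$. By equivariance we may assume $\pi(x)=1$, i.e.\ $x\in Q$. If $\pi(xs)=w$ is a nonempty reduced word beginning with $a$, then $xs\in wQ\subseteq D_a$ (from the proof of Lemma \ref{Lem:FQ}).

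I would show that $Qs\cap wQ\neq\emptyset$ forces $|w|\le M_s$ for some constant $M_s$. Write $w=a_1\cdots a_n$ and let $y=xs\in wQ$. For $1\le k<n$, the prefix $u_k=a_1\cdots a_k$ satisfies $u_k^{-1}y\in a_{k+1}\cdots a_nQ\subseteq D_{a_{k+1}}$. Also $u_k^{-1}\in D_{a_k^{-1}}$, because $u_k^{-1}\cdot 1$ is a reduced word applied to $1\in Q$. Set $y'=u_k^{-1}y$, so that $u_k^{-1}=y'y^{-1}$. Then $y'\in D_{a_{k+1}}$ and $y'y^{-1}\in D_{a_k^{-1}}$ with $a_{k+1}\ne a_k^{-1}$. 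This gives $y'\in D_{a_{k+1}}\cap D_{a_k^{-1}}y$, which is a finite set of the type appearing in $E$ (with the right translate by $y$).

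Since $y=xs$ varies, this is not yet uniform. The fix is to use $u_k^{-1}x$ instead: $u_k^{-1}x\in D_{a_{k+1}}s^{-1}$ and $u_k^{-1}\in D_{a_k^{-1}}$. Hence $u_k^{-1}x\cdot s\in D_{a_{k+1}}$, while $u_k^{-1}x\in u_k^{-1}Q$.

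Here is the cleaner route. The elements $z_k=u_k^{-1}xs\in D_{a_{k+1}}$ satisfy $z_ks^{-1}=u_k^{-1}x$. Since $u_k^{-1}x\in u_k^{-1}Q\subseteq D_{a_k^{-1}}$ (for $k\ge1$, $x\in Q$), we get $z_k\in D_{a_{k+1}}\cap D_{a_k^{-1}}s$. Setting $g=s^{-1}$, this set is contained in $E_{s^{-1}}$. The $z_k$ are pairwise distinct because the $u_k$ are. Therefore $n-1\le|E_{s^{-1}}|$, and $\pi$ is Lipschitz with constant $\max_{s\in S}(|E_{s^{-1}}|+1)$.

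\emph{Part (b).} Apply the same argument to $x\in Q$ and $y=xg$ with $|g|_S\le R$, where $n\ge2$ letters separate the pieces. The elements $z_k$ lie in $\bigcup_{|g|\le R}E_{g^{-1}}$, a finite set $\Phi_R$. In particular, already $z_1=u_1^{-1}y\in \Phi_R$, so $y\in a_1\Phi_R$, and therefore $x=yg^{-1}$ lies within distance $R$ of the finite set $\mathcal A\Phi_R$.

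For general $x$, translate by $\pi(x)^{-1}$. If $\d_F(\pi(x),\pi(y))\ge2$, then $\pi(x)^{-1}x$ lies in a fixed finite set $\Psi_R$. Here I need $w$ of length at least $2$; if $w=\pi(x)^{-1}\pi(y)$ has length $\ge2$, the argument applies with $k=1$. Consequently $\d_G(x,F)\le\d_G(x,\pi(x))\le\max_{\psi\in\Psi_R}|\psi|_S$.

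Take $D_R$ to be this maximum. Then $\d_G(x,F)>D_R$ forces $\d_F(\pi(x),\pi(y))\le1$.

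The main obstacle is the bookkeeping that places the auxiliary points into the finite sets $E_g$ uniformly in $g$ from a finite ball. This works because the condition $a_{k+1}\ne a_k^{-1}$ lets condition (c) of Proposition \ref{Prop:ping} apply.
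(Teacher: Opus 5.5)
Your proof is correct and is essentially the paper's argument: the vertices strictly between $\pi(x)$ and $\pi(xg)$ on the tree geodesic give pairwise distinct elements of the finite set $E_{g^{-1}}$. This yields the Lipschitz bound and, when $\d_F(\pi(x),\pi(xg))\ge 2$, confines $\pi(x)^{-1}x$ to a finite set depending only on $R$; the paper uses $v^{-1}x\in E_g$ where you use $u_k^{-1}xg\in E_{g^{-1}}$, which is immaterial, and your normalization $\pi(x)=1$ is the equivariance step the paper leaves implicit. You can delete the discarded first attempt with $y'$, and you should take $D_R\ge 1$ to cover the case where $\Psi_R$ is empty or the maximum is $0$.
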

\begin{proof}
Clearly, $\pi$ is an $F$-equivariant retraction. The ping-pong argument above also shows that $x\in D_a$ if and only if the reduced word representing $\pi(x)$ begins with $a$.

Fix any $x,g\in G$ and consider the segment $[\pi(x),\pi(xg)]$ in the Cayley tree of $F$. For every interior vertex $v$ of this segment, the words $v^{-1}\pi(x)$ and $v^{-1}\pi(xg)$ begin with distinct letters. By equivariance of $\pi$, we have $v^{-1}x\in E_g$. Distinct vertices give distinct elements of $E_g$. Hence,
$$
\d_F(\pi(x),\pi(xg))\le |E_g|+1,\qquad {\rm and}\qquad 
\d_F(\pi(x),\pi(xg))\ge2\ \Longrightarrow\ x\in FE_g.
$$
Taking $L=\max_{g\in S}(|E_g|+1)$ and applying the first estimate along edge paths, we conclude that $\pi$ is $L$-Lipschitz, thus completing the proof of  (a).

Further, for every $R>0$, we can choose $D_R>0$ such that $\d_G(1,z)\le D_R$ for all $z\in E_g$ and all $g\in G$ satisfying $\d_G(1,g)\le R$, since bounded balls in $G$ and the sets $E_g$ are finite. If $\d_G(x,y)\le R$, we can write $y=xg$, where $\d_G(1,g)\le R$. Taking into account that $x\in FE_g$ implies $\d_G(x,F)\le D_R$ for these $g$, we obtain (b).
\end{proof}

\begin{lem}\label{Lem:sqc}
The subgroup $F$ is strongly quasi-convex in $G$.
\end{lem}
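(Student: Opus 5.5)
Fix $\lambda\ge1$, $c\ge0$ and a $(\lambda,c)$-quasi-geodesic $p$ in $Cay(G,S)$ with endpoints $p_-,p_+\in F$. The plan is to project $p$ to the tree $Cay(F,\{x_i\})$ using the retraction $\pi$ of Lemma \ref{Lem:pi}, and to show that a long excursion of $p$ far from $F$ must be almost constant under $\pi$. Such an excursion would then be a long subpath whose endpoints are close in $G$, which the quasi-geodesic inequality forbids.

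\textbf{Step 1: projection of a far excursion.} Suppose $q$ is a maximal subpath of $p$ whose vertices all satisfy $\d_G(v,F)>D_1$, where $D_1$ is the constant of Lemma \ref{Lem:pi}(b) with $R=1$. By Lemma \ref{Lem:pi}(b), consecutive vertices of $q$ have $\pi$-images that are equal or adjacent in the tree, so $\pi(q)$ is a combinatorial path in the tree. The key claim is that $\pi(q)$ cannot backtrack. If it went $u\to w\to u$, then some vertex $v$ of $q$ with $\pi(v)=w$ would have neighbours on both sides whose projections equal $u$. I would rule this out using the proof of Lemma \ref{Lem:pi}: whenever $\pi(x)\ne\pi(xg)$, the element $\pi(x)^{-1}x$ lies in the finite set $E_g$ or in the analogous finite set $\bigcup_{a}(D_a\cap (G\setminus\bigcup_b D_b)g^{-1})$ coming from condition (b). Either way $x$ is uniformly close to $F$. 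So once $D_1$ is chosen larger, every change of projection along $q$ is impossible, and $\pi$ is constant on $q$.

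\textbf{Step 2: bounding the excursion.} With $\pi$ constant on $q$, I need the endpoints of $q$ to be close in $G$. The vertex of $p$ just before $q$ and the one just after $q$ are within $D_1$ of $F$, say near $f,f'\in F$. Since $\pi$ is Lipschitz and a retraction, $\pi$ of these vertices lies within $LD_1$ of $f$ and $f'$ respectively. Because both projections equal the common value $\pi(q)$ up to an error of $1$, we get $\d_F(f,f')\le 2LD_1+2$. Hence $\d_G(q_-,q_+)\le 2D_1+2LD_1+2=:M$. The quasi-geodesic inequality then gives $\ell(q)\le\lambda M+c$, so every vertex of $q$ lies within $D_1+\lambda M+c$ of $F$. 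This $R$ depends only on $\lambda$ and $c$, which proves strong quasi-convexity.

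\textbf{Main obstacle.} The delicate point is Step 1: showing that far from $F$ the projection $\pi$ is locally constant, and not merely $1$-Lipschitz. Lemma \ref{Lem:pi}(b) as stated only gives distance at most $1$. I would try first to extract local constancy from the finiteness argument: the coset decomposition $G=\bigsqcup fQ$ changes projection across an $S$-edge $x\to xs$ only when $\pi(x)^{-1}x$ lies in a finite set depending on $s$. This needs showing that $\{q\in Q: qs\notin Q\}$ is finite, which should follow from (b) and (c) applied to $qs\in D_a$ with $q\notin D_a$. If that fails, the fallback is to allow $\pi(q)$ to be a geodesic tree path and compare its length with $\ell(q)$ via the Lipschitz bound. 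That comparison would force $\ell(q)$ to be comparable to $\d_G(q_-,q_+)$ only if far regions map with bounded length, so I expect local constancy to be necessary.
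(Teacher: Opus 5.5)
Your Step~1 relies on a claim that is false under the hypotheses of Proposition~\ref{Prop:ping}. Far from $F$, the retraction $\pi$ need not be locally constant. Also, neither $\{q\in Q\mid qs\notin Q\}$ nor $\bigcup_a (D_a\cap Qs^{-1})$ need be finite. Condition (c) only controls two \emph{distinct} domains $D_a,D_b$, and (b) says nothing about how $Q$ meets right translates of a single $D_a$.

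Here is a concrete counterexample. Let $G$ be free on $\{x,y\}$, with $r=1$ and $x_1=x$. Let $Q_0$ be the set of reduced words not beginning with $x^{\pm1}$, and put $T=\{y^{2k}\mid k\ge1\}$ and $Q=(Q_0\setminus T)\cup xT$. Set $D_x=\bigcup_{n\ge1}x^nQ$ and $D_{x^{-1}}=\bigcup_{n\ge1}x^{-n}Q$.
\begin{itemize}
\item Conditions (a) and (b) hold because $Q$ still contains exactly one element of each coset $Fg$.
\item Identify $F=\langle x\rangle$ with $\ZZ$ and let $\pi_0$ be the standard projection. The new projection satisfies $\pi_0-1\le\pi\le\pi_0$, and from this (c) follows by the usual free-group computation.
\item Yet $y^{2k+1}\in Q$ while $y^{2k+2}\in x^{-1}Q\subseteq D_{x^{-1}}$.
\end{itemize}
So $\pi(y^n)$ alternates between $1$ and $x^{-1}$ along a ray that leaves every neighbourhood of $F$. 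Thus $\pi$ backtracks arbitrarily far from $F$, and no choice of $D_1$ makes it constant on excursions. Step~2 therefore has no valid input.

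The missing idea is that local constancy is unnecessary. Lemma~\ref{Lem:pi}(b) holds for \emph{every} $R$, and you should apply it with $R$ large relative to $\lambda$ rather than with $R=1$. This is the paper's argument:
\begin{itemize}
\item Take $R>2\lambda$ and $\rho>D_R$.
\item Let $q$ be a subpath of $p$ whose endpoints are at distance $\rho$ from $F$ and whose interior vertices are farther away.
\item Cut $q$ into at most $\ell(q)/R+1$ pieces of length at most $R$. This gives $\d_F(\pi(q_-),\pi(q_+))\le \ell(q)/R+1$.
\item The retraction estimate $\d_G(g,\pi(g))\le (L+1)\d_G(g,F)$ then yields $\ell(q)\le\lambda\bigl(2(L+1)\rho+\ell(q)/R+1\bigr)+c$.
\item Hence $\ell(q)\le 4\lambda(L+1)\rho+2\lambda+2c$.
\end{itemize}
Your fallback failed only because with $R=1$ the same inequality reads $\ell(q)\le\lambda(\cdots+\ell(q))+c$, which is vacuous for $\lambda\ge1$. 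The uniformity of Lemma~\ref{Lem:pi}(b) in $R$ is exactly what supplies the needed contraction factor $1/R<1/\lambda$.
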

\begin{proof}
Let $L$ be a Lipschitz constant of $\pi$ provided by Lemma \ref{Lem:pi}. For any $g\in G$, choosing a nearest point $f\in F$ to $g$ gives
\begin{equation}\label{Eq:dxpx}
\d_G(g,\pi(g))\le\d_G(g,f)+\d_F(f,\pi(g))=\d_G(g,f)+\d_F(\pi(f),\pi(g))
\le(L+1)\d_G(g,F).
\end{equation}

Fix any $\lambda\ge1$ and $c\ge0$. Let $p$ be a $(\lambda, c)$-quasi-geodesic in $Cay(G,S)$ with endpoints $p_-, p_+\in F$. Thus, for every subpath $q$ of $p$, we have $\ell (q)\le\lambda\d_G(p_-,p_+)+ c$. We choose integers $R>2\lambda$ and $\rho>D_R$. 

\begin{figure}
\centering\scalebox{.9}{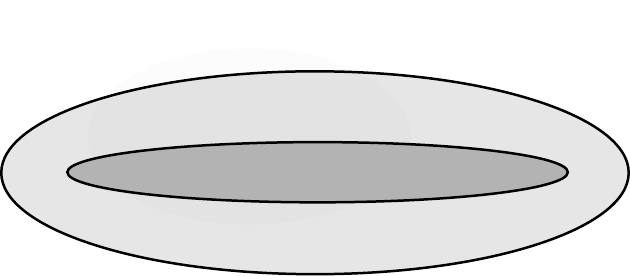}\\
  \caption{Estimating the Hausdorff distance from $p$ to $F$}
\end{figure}

Consider any subpath $q$ of $p$ whose endpoints have distance $\rho$ from $F$ and whose interior vertices have distance greater than $\rho$. Subdividing $q$ into at most $\ell(q)/R+1$ subpaths of length at most $R$ and using Lemma \ref{Lem:pi} (b), we obtain $\d_F(\pi(q_-),\pi(q_+))\le\ell(q)/R+1$. Using (\ref{Eq:dxpx}), we obtain
$$
\ell(q)\le\lambda\d_G(q_-,q_+)+ c
\le\lambda\Big(2(L+1)\rho+\ell(q)/R+1\Big) +c.
$$
Since $R>2\lambda$, we obtain $\ell(q)\le4\lambda(L+1)\rho+2\lambda +2c$. 

Observe that every vertex of $p$ outside the $\rho$-neighborhood of $F$ lies on a subpath $q$ as above. This results in a bound on the distance from $p$ to $F$ that depends on $\lambda$, $c$, and the fixed data only. Thus, $F$ is strongly quasi-convex.
\end{proof}

The proof of Proposition \ref{Prop:ping} is completed.

\paragraph{3.4. Lipschitz retractions to strongly quasi-convex free subgroups.} Our next goal is to construct Lipschitz projections to strongly quasi-convex free subgroups with strong contraction properties. We begin with an auxiliary metric construction.

\begin{lem}\label{Lem:d*}
Let $H$ be a strongly quasi-convex subgroup of a group $G$ generated by a finite set $X$, and let $\d$ be the word metric on $G$ with respect to $X$. There exists an $H$-invariant metric $\d^*$ on $G$ satisfying the following conditions.
\begin{enumerate}
\item[(a)] $\d^*\le\d$ and $\d^*|_{H\times H}=\d|_{H\times H}$.
\item[(b)] For every $R>0$, we have $\sup\{\d^*(x,y)\mid \d(x,y)\le R,\ \d(x,H)\ge D\}\to 0$ as $D\to\infty$.
\end{enumerate}
\end{lem}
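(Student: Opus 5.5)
We construct $\d^*$ as a path metric on $G$ in which edges far from $H$ are made very short. Choose a weight function $w\colon [0,\infty)\to (0,1]$ that is non-increasing, with $w(t)=1$ for $t\le T_0$ and $w(t)\to 0$ as $t\to\infty$; the threshold $T_0$ and the decay rate are to be fixed later. Each edge $e$ of $Cay(G,X)$ gets the length $w(\d(e,H))$, where $\d(e,H)$ is the minimal distance from an endpoint of $e$ to $H$. Then $\d^*(x,y)$ is the infimum of the weighted lengths of edge paths from $x$ to $y$. The weights are invariant under left multiplication by $H$, because $\d(hv,H)=\d(v,H)$ for $h\in H$. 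Hence $\d^*$ is an $H$-invariant pseudometric, and $\d^*\le \d$ because every weight is at most $1$.

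Property (b) is the easy part. Suppose $\d(x,y)\le R$ and $\d(x,H)\ge D$. Take a $\d$-geodesic from $x$ to $y$. Every vertex on it is at distance at least $D-R$ from $H$, so $\d^*(x,y)\le R\,w(D-R)\to 0$ as $D\to\infty$. The same argument gives positivity: a path leaving the ball of radius $1$ around $x$ has weighted length at least $\min_{\d(v,x)\le \d(x,H)+1}w(\d(v,H))>0$, since $w>0$. So $\d^*$ is a genuine metric. It may only be a pseudometric in the limit, but we need actual separation only for the statements as given.

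The main obstacle is the equality $\d^*|_{H\times H}=\d|_{H\times H}$ in (a). This is exactly where strong quasi-convexity has to be used. Take $h,h'\in H$ and a path $p$ from $h$ to $h'$ with weighted length $\ell^*(p)<\d(h,h')$; we want a contradiction. The path $p$ can be cheap only if it spends long stretches far from $H$. We decompose $p$ into maximal subpaths lying within distance $T_0$ of $H$, where weights equal $1$, and excursions that go farther out. The endpoints of each excursion $q$ are within $T_0$ of points $u,v\in H$.

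The key claim is the following: there is a function $\phi$ with the property that any path $q$ whose endpoints lie within $T_0$ of $H$, and which travels at distance at least $M$ from $H$, satisfies $\ell(q)\ge \phi(M)\,\d(q_-,q_+)$ with $\phi(M)\to\infty$ as $M\to\infty$. This is the standard consequence of strong quasi-convexity (Morse) that paths avoiding a large neighbourhood of $H$ are super-linearly long. To prove it, argue by contradiction: a path with $\ell(q)\le \lambda\,\d(q_-,q_+)$ can be reparametrized, after trimming, into a $(\lambda',c)$-quasi-geodesic with endpoints near $H$. Such a quasi-geodesic must stay within $R(\lambda',c)$ of $H$, which fails once $M>R$.

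Now choose $w$ decaying slowly enough, roughly $w(t)\ge 1/\phi(t)$ at the relevant scales, after arranging $\phi$ monotone. Then the weighted length of each excursion is at least $\d(q_-,q_+)$. Combined with the segments where the weight is $1$, the triangle inequality gives $\ell^*(p)\ge \d(h,h')$. In practice we may instead define $\d^*$ directly as the largest metric below $\d$ that agrees with $\d$ on $H$ and is squeezed by (b), taking an infimum over chains. We would then check via the Morse estimate that the chains cannot shortcut between points of $H$. The quantitative matching of the decay of $w$ with $\phi$ is the step we expect to require the most care.
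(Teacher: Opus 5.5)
Your construction matches the paper's: a path metric on $Cay(G,X)$ whose edge weights decrease with the distance to $H$ (the paper uses weight $2^{-k}$ on edges whose endpoints are both at distance $\ge r_k$). Your arguments for $H$-invariance, $\d^*\le\d$ and (b) are fine. The gap is in (a), and it is structural rather than a matter of quantitative care.

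First, your key claim is false as stated. In $F_2=\langle a,b\rangle$ with $H=\langle a\rangle$, the path labelled $b^mb^{-m}a^{2m}$ joins $1$ to $a^{2m}$ and reaches distance $m$ from $H$. Yet its length is only $2\,\d(q_-,q_+)$. A path with $\ell(q)\le\lambda\,\d(q_-,q_+)$ can contain backtracking detours, and no ``trimming'' turns it into a quasi-geodesic that still travels far from $H$.

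More seriously, your triangle-inequality step needs $\ell^*(q)\ge\d(q_-,q_+)$ for each excursion, and this fails for \emph{every} admissible $w$. Take $G=\langle x,y\rangle*\langle c\rangle\cong\ZZ^2*\ZZ$ with $H=\langle c\rangle$; this $H$ is strongly quasi-convex because every $c$-edge is a bridge of the Cayley graph. Let $t_0$ be the largest integer with $w(t_0)=1$. Consider the staircase in the flat $\langle x,y\rangle$ from $x^{t_0+1}$ to $y^{t_0+1}$, alternating $y$ and $x^{-1}$. All of its edges are at distance $t_0+1$ from $H$, so its weighted length is $w(t_0+1)(2t_0+2)<2t_0+2=\d(q_-,q_+)$. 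Strong quasi-convexity controls only paths whose endpoints lie \emph{in} $H$, so (a) cannot be proved excursion by excursion.

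The missing idea is a global minimal-counterexample argument, which replaces your trimming step.
\begin{enumerate}
\item[(1)] Fix $M_k$ such that every $(2^k,0)$-quasi-geodesic with endpoints in $H$ lies in the $M_k$-neighbourhood of $H$. Choose increasing $r_k>M_k+2$, and give an edge weight $2^{-k}$ for the largest $k$ with both endpoints at distance $\ge r_k$.
\item[(2)] Suppose some path between $h,h'\in H$ has weighted length $<\d(h,h')$. Take one with the fewest edges, and let $k\ge1$ be the largest level among its edges.
\item[(3)] Any subpath $q$ with $\ell(q)>2^k\d(q_-,q_+)$ has weighted length at least $2^{-k}\ell(q)>\d(q_-,q_+)$. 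Replacing it by a $\d$-geodesic would therefore give a counterexample with fewer edges.
\item[(4)] Hence the minimal counterexample is a $(2^k,0)$-quasi-geodesic with endpoints in $H$. It therefore stays within $M_k$ of $H$, contradicting the fact that it contains an edge at distance $\ge r_k>M_k$.
\end{enumerate}
Your fallback of taking ``the largest metric below $\d$ that agrees with $\d$ on $H$'' is not a usable definition, since that metric is $\d$ itself.
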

\begin{proof}
For each $k\ge1$, choose $M_k>0$ such that every
$(2^k,0)$-quasi-geodesic with endpoints in $H$ lies in the
$M_k$-neighborhood of $H$. We choose increasing integers $r_k>M_k+2$.

For every edge $e$ of $Cay(G,X)$, set
$$
\nu(e)=\max\{k\ge0\mid
\min\{\d(e_-,H),\d(e_+,H)\}\ge r_k\}
$$
(we assume $r_0=0$ for convenience) and assign $e$ length $2^{-\nu(e)}$. Let $\d^*$ be the
resulting path metric on vertices. Clearly, $\d^*\le\d$,
and $\d^*$ is $H$-invariant as so are the edge lengths. 

Suppose that there exists a path $p$ from some $h\in H$ to some $h'\in H$ whose modified length is less than $\d(h,h')$. Choose such a path with the minimal possible number of edges. Let $k$ be the largest value of $\nu(e)$ among its edges. Necessarily, $k\ge1$ and the path $p$ is a $(2^k,0)$-quasi-geodesic with respect to $\d$. Indeed, if a subpath $q$ had original length greater than $2^k\d(q_-,q_+)$, replacing $q$ by a geodesic in $(Cay(G,X), \d)$ would strictly decrease both
the original and modified lengths: every edge of $p$ has
modified length at least $2^{-k}$, while every edge of the
replacement has modified length at most one. This would contradict the choice of $p$. Thus, $p$ is a $(2^k,0)$-quasi-geodesic and hence belongs to the $M_k$-neighborhood of $H$. However, $p$ contains an edge $e$ with $\nu(e)=k$, whose endpoints have distance at least $r_k>M_k$ from $H$.
 This contradiction proves (a).

Finally, fix $R>0$ and $k\ge1$. If $\d(x,H)\ge r_k+R$
and $\d(x,y)\le R$, every vertex of an original geodesic
from $x$ to $y$ has distance at least $r_k$ from $H$.
Every edge of this geodesic therefore has modified length
at most $2^{-k}$, giving
$$
\d^*(x,y)\le 2^{-k}\d(x,y)\le 2^{-k}R.
$$
Letting $k\to\infty$, we obtain (b).
\end{proof}

By the \textit{Cayley tree} $\mathcal T$ of a free group $F$ corresponding to a fixed free basis $X$ of $F$, we mean an unoriented, unlabeled graph with the vertex set $F$, where two vertices $u$ and $v$ are connected by an edge iff $u^{-1}v\in X^{\pm 1}$. We endow $\mathcal T$ with a metric $\d_{\mathcal T}$ by identifying every edge with the segment $[0,1]$.

\begin{prop}\label{Prop:contr}
Let $F$ be a strongly quasi-convex free subgroup of a finitely generated group $G$, let $\d$ be a word metric on $G$, and let $\mathcal T$ be a Cayley tree of $F$. There exists a map $\pi\colon G\to\mathcal T$ satisfying the following conditions.
\begin{enumerate}
\item[(a)] $\pi$ is $F$-equivariant and Lipschitz.
\item[(b)] $\pi\vert_F\equiv id_F$.
\item[(c)] For every $R>0$, we have  $\sup \{ \d_{\mathcal T}(\pi(x),\pi(y))\mid \d(x,y)\le R,\; \d(x,F)\ge D\} \to 0$ as $D\to\infty$.
\end{enumerate}
\end{prop}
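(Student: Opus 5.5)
We combine the modified metric $\d^*$ of Lemma \ref{Lem:d*} with a closest-point-type projection to $F$, and then smooth that projection into $\mathcal T$ by linear interpolation along edges. Let $H=F$, let $\d$ be the word metric for a finite generating set $X$, and let $\d^*$ be the $F$-invariant metric given by Lemma \ref{Lem:d*}.

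First, we define a coarse projection $p\colon G\to F$. Choose a set of representatives $T$ of the right cosets $F\backslash G$, i.e.\ $G=\bigsqcup_{t\in T}Ft$ (any $g$ is uniquely $ft$). For each $t$, choose a nearest point $f_t\in F$ to $t$ with respect to $\d$, and set $p(ft)=ff_t$. Then $p$ is $F$-equivariant, $p|_F=id_F$ if we take $1\in T$, and $\d(g,p(g))=\d(g,F)$. Left-invariance of $\d$ and $F$-invariance of $\d^*$ are what make equivariance consistent.

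Next, we show that the projection is controlled by $\d^*$. The key claim is that there exist constants $\mu,\kappa$ such that
$$\d_F(p(x),p(y))\le \mu\,\d^*(x,y)+\kappa \qquad \text{for all } x,y\in G,$$
and in fact a sharper, "small-scale" version holds. To obtain this, we take a $\d^*$-near-geodesic path from $x$ to $y$. The portion of the path near $F$ has $\d^*$-length comparable to its $\d$-length (Lemma \ref{Lem:d*}(a) and the construction). The portion far from $F$ is short in $\d^*$, but it must not move the projection much. Here we use strong quasi-convexity: the concatenation $[p(x),x]\cup[x,y]\cup[y,p(y)]$ of a far excursion would be a quasi-geodesic with endpoints in $F$ leaving every bounded neighborhood, unless $\d(p(x),p(y))$ is bounded in terms of $\d(x,F)$ and $\d(y,F)$. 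This is the standard bounded-projection property of Morse subsets: if $\d(x,y)\le \d(x,F)/2$, then $\operatorname{diam}p(\{x,y\})$ is bounded by a uniform constant. That property should follow directly from strong quasi-convexity.

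To obtain the tree map and property (c), the bounded-constant error must be removed. Instead of a single projection, we average: for $x$, consider the finite set $P(x)=p(B_\d(x,\rho(x)))$ with radius $\rho(x)=\lfloor \d(x,F)/2\rfloor$. By the bounded-projection property, $P(x)$ has uniformly bounded diameter in $\mathcal T$, and we would define $\pi(x)$ as a barycenter (e.g.\ the center of the minimal subtree spanned by $P(x)$, suitably weighted) in $\mathcal T$. As $\d(x,F)\to\infty$, the balls $B(x,\rho(x))$ and $B(y,\rho(y))$ with $\d(x,y)\le R$ overlap in all but a proportion $O(R/\rho)$ of their points. Weighting points with a function decaying linearly to the boundary of the ball then makes the weighted barycenters move by $O(R/\rho)$ in $\mathcal T$. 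This gives (c). Equivariance holds since all choices are $F$-equivariant, and (b) holds since $\rho=0$ on $F$. Lipschitzness follows from the uniform bound on $\operatorname{diam}P$ together with the $O(R/\rho)$ estimate.

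The main obstacle is this averaging step: making a barycenter in a tree depend Lipschitz-continuously on weighted finite measures of bounded diameter. The approach we would try first is a canonical Lipschitz barycenter on bounded-diameter measures in trees, namely a median or center of mass along the convex hull. If that fails, we would instead build $\pi$ via Lemma \ref{Lem:d*} directly. Concretely, we would define $\pi$ on $F$ as the identity and extend it along a $\d^*$-geodesic retraction, using that $\d^*$-short paths far from $F$ are uniformly short, and that $\d^*$ restricted to $F$ agrees with $\d$ and hence is bi-Lipschitz to $\d_{\mathcal T}$.
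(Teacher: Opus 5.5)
\textbf{There is a genuine gap: the averaging step cannot produce (c).} It fails for two independent reasons.

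First, your ``bounded-projection property'' says that nearest-point projections of the balls $B(x,\d(x,F)/2)$ have uniformly bounded diameter. That is the \emph{strong contraction} property, and it does not follow from strong quasi-convexity. Morse subsets are in general only sublinearly contracting. Arzhantseva--Cashen--Gruber--Hume even give an element that is strongly contracting for one finite generating set but not for another, whereas the Morse property does not depend on the generating set.

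Second, and more seriously, you claim that $B(x,\rho)$ and $B(y,\rho)$ with $\d(x,y)\le R$ share all but a proportion $O(R/\rho)$ of their points. This is a F{\o}lner-type statement, and it fails here. $G$ contains an undistorted non-abelian free group, so it has exponential growth and is non-amenable. Your weights have the form $w_x=\lambda_x w$ for a profile $w$ depending only on $\d(1,\cdot)$ and $\rho$. Hence $\|w_x-w_{xs}\|_1=\|w-\lambda_s w\|_1$. Because Reiter's condition fails, there is $\epsilon>0$ with $\max_{s\in S}\|w-\lambda_s w\|_1\ge\epsilon\|w\|_1$ for \emph{every} nonnegative profile, linear decay included. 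So already at a fixed radius, a definite fraction of the mass changes when $x$ moves one step. The barycenter of the pushed-forward measure can then move by an amount comparable to $\operatorname{diam}P(x)$, and nothing forces this to tend to $0$. Neither (c) nor your Lipschitz estimate follows.

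\textbf{The missing idea.} All the contraction you need is already encoded in $\d^*$, so no projection in $G$ and no averaging is required. By Lemma~\ref{Lem:d*}(b), property (c) follows for \emph{any} $F$-equivariant map $\pi\colon G\to\mathcal T$ extending $\mathrm{id}_F$ that is Lipschitz with respect to $\d^*$. Such a map exists because metric trees are injective:
\begin{itemize}
\item Since $F$ is undistorted, Lemma~\ref{Lem:d*}(a) gives $\d_{\mathcal T}(f,f')\le L\d^*(f,f')$ on $F$. Hence the balls $\overline B_{\mathcal T}(f,L\d^*(x,f))$, $f\in F$, intersect pairwise.
\item The Helly property and compactness then make $K_x=\bigcap_{f\in F}\overline B_{\mathcal T}(f,L\d^*(x,f))$ a nonempty compact subtree.
\item Let $\pi(x)$ be the circumcenter of $K_x$. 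Then $K_{fx}=fK_x$ gives equivariance, and $K_f=\{f\}$ gives $\pi|_F=\mathrm{id}_F$.
\item Helly again gives $\d_H(K_x,K_y)\le L\d^*(x,y)$. Circumcenters of bounded subtrees move by at most the Hausdorff distance, so $\d_{\mathcal T}(\pi(x),\pi(y))\le L\d^*(x,y)\le L\d(x,y)$.
\end{itemize}

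Your fallback does point toward $\d^*$ and the bi-Lipschitz comparison on $F$. However, ``extend along a $\d^*$-geodesic retraction'' does not define a map. The ingredient the argument actually needs is this Lipschitz-extension (Helly) property of trees, together with a canonical choice such as the circumcenter to get equivariance.
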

\begin{proof}
Since $F$ is strongly quasi-convex, we can apply Lemma \ref{Lem:d*} with $H=F$. Let $\d^*$ be the metric provided by this lemma. Since $F$ is undistorted in $G$, there exists $L\ge1$ such that $\d_{\mathcal T}(f,f')\le L\d^*(f,f')$ for $f,f'\in F$. For $x\in G$, put
$$
K_x=\bigcap_{f\in F}\overline B_{\mathcal T}(f,L\d^*(x,f)),
$$
where $\overline B_{\mathcal T}(f,L\d^*(x,f))$ is the closed ball in $\mathcal T$ of radius $L\d^*(x,f)$ centered at $f$. Since $d_T(f,f')\le Ld^*(f,f')
\le Ld^*(x,f)+Ld^*(x,f')$ by the triangle inequality, these balls intersect pairwise. The Helly property for subtrees and compactness of closed balls in $\mathcal T$ show that $K_x$ is a nonempty compact subtree. Let $\pi(x)$ be its circumcenter. Clearly, $K_f=\{f\}$ and $K_{fx}=fK_x$ for all $f\in F$ and $x\in G$. Hence, $\pi$ is $F$-equivariant and condition (b) holds.

To show that $\pi$ is Lipschitz with respect to $\d^*$, we fix $x,y\in G$ and let $\e=L\d^*(x,y)$. If $z\in K_x$, then $\d_{\mathcal T}(z,f)\le L\d^*(y,f)+\e$ for every $f\in F$. Thus $\overline B_{\mathcal T}(z,\e)$ meets every ball defining $K_y$, and Helly's property gives $\d_{\mathcal T}(z,K_y)\le\e$. Interchanging $x,y$, we obtain $\d_H(K_x,K_y)\le\e$. Using the well-known fact that the distance between the circumcenters of two bounded subtrees does not exceed their Hausdorff distance, we obtain $\d_{\mathcal T}(\pi(x),\pi(y))\le\e$. Thus, $\pi$ is $L$-Lipschitz with respect to $\d^*$ and hence with respect to $\d$. This completes the proof of (a).

Finally, condition (c) follows from part (b) of Lemma \ref{Lem:d*}.
\end{proof}

\paragraph{3.5. Proof of Theorem \ref{Thm:AH} (b).}
To prove the forward implication, let $A_i\subseteq B_i$ and $g_i\in G$, $i=1,2$, satisfy Definition \ref{Def:PP}. Since the left side of (\ref{Eq:PP}) is at most $2$, we obtain the following (see Fig. \ref{Fig1}):
\begin{equation}\label{Eq:PP2}
B_1\cap B_2=\emptyset,\qquad G=g_1A_1\cup g_2A_2,\qquad
B_1\cup B_2\subseteq g_1A_1\cap g_2A_2.
\end{equation}

\begin{figure}
\centering\scalebox{.9}{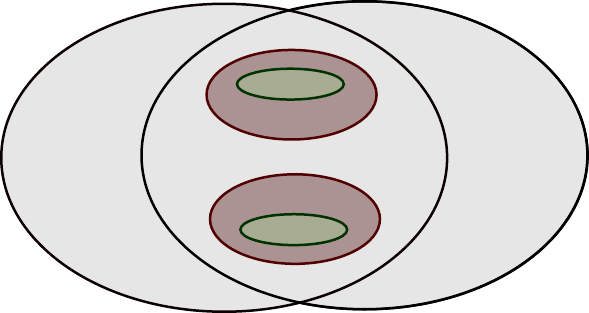}\\
  \caption{Relations between subsets in property $\PP(2)$}\label{Fig1}
\end{figure}

We set
$$
a_i=g_i^2,\qquad D_{a_i^{-1}}=A_i,\qquad D_{a_i}=G\setminus g_i^2A_i
\qquad(i=1,2).
$$
By (\ref{Eq:PP2}), the four domains are pairwise disjoint and their union misses the nonempty set $g_1A_2\subseteq g_1^2A_1\setminus g_1A_1$. Thus, condition (a) of Proposition \ref{Prop:ping} holds, while (b) follows immediately from the definitions.

To verify (c), observe that (\ref{Eq:PP2}) and the definitions give
$$
\begin{aligned}
D_{a_i^{-1}}h\cap D_c&\subseteq A_ih\setminus B_i
&&\text{if }c\ne a_i^{-1},\\
D_{a_i}h\cap D_c&\subseteq g_i(A_ih^{-1}\setminus B_i)h
&&\text{if }c\ne a_i
\end{aligned}
$$
for every $h\in G$, $i=1,2$, and $c\in\{a_1^{\pm1},a_2^{\pm1}\}$. Both right sides are finite by admissibility. Applying Proposition \ref{Prop:ping}, we conclude that $g_1^2,g_2^2$ freely generate a strongly quasi-convex subgroup of $G$.

To prove the backward implication, let $F$ be a strongly quasi-convex, non-cyclic, free subgroup of $G$. Fix a word metric $\d$ on $G$ and a finite basis $X$ of $F$ containing distinct elements $a,b$. Let $\mathcal T$ be the Cayley tree of $F$ with edges joining $f$ to $fc$ for $c\in X^{\pm1}$, so that $F$ acts on $\mathcal T$ by left multiplication. Let $\pi\colon G\to\mathcal T$ be the map provided by Proposition~\ref{Prop:contr}, and let $L$ denote its Lipschitz constant. We will pull back the decomposition from Example \ref{Ex:Free} (b) (see Fig. \ref{Fig0})  through $\pi$.

For $c\in\{a,b\}$, let $v_c$ be the midpoint of the edge $[1,c]$ in $\mathcal T$ and let $J_c$ be the closed half-tree containing $c$ and bounded by $v_c$. Thus, the vertices of $J_c$ and $cJ_c$ are precisely the reduced words beginning with $c$ and $c^2$, respectively. Set
$$
A_1=\pi^{-1}(aJ_a)\subseteq B_1=\pi^{-1}(J_a),\qquad
A_2=\pi^{-1}(bJ_b)\subseteq B_2=\pi^{-1}(J_b).
$$
Since $\pi\vert_F=id_F$, these sets restrict to the sets used in Example~1.2, with respect to the basis $X$.

We first verify admissibility. Fix $c\in\{a,b\}$ and $h\in G$, and suppose that $\pi(x)\in cJ_c$ and $\pi(xh)\notin J_c$. The distance between these two points is at least one, so Proposition \ref{Prop:contr} (c) gives $\d(x,F)\le D_h$ for a certain constant $D_h$ depending on $h$. The segment connecting them passes through $v_c$, and hence $\d_{\mathcal T}(\pi(x),v_c)\le L\d(1,h)$. Choosing $f\in F$ with $\d(x,f)\le D_h$, we obtain
$$
\d_{\mathcal T}(f,v_c)\le L(D_h+\d(1,h)).
$$
Since $\mathcal T$ has finite degree, only finitely many elements $f\in F$ satisfy this inequality. The $D_h$-neighborhood of this finite set in $G$ is also finite. Thus, there are only finitely many such $x$, and both pairs $A_i\subseteq B_i$ are admissible.

We take $g_1=a^{-2}$ and $g_2=b^{-2}$. The equivariance of $\pi$ gives $g_1A_1=\pi^{-1}(a^{-1}J_a)$ and $g_2A_2=\pi^{-1}(b^{-1}J_b)$. Hence, the pairs above satisfy (\ref{Eq:PP2}); indeed, the corresponding relations are obvious for the images of our sets under $\pi$ and persist under passing to preimages. Therefore
$$
\one_{a^{-2}A_1}+\one_{b^{-2}A_2}
\ge\one_G+\one_{B_1}+\one_{B_2}.
$$

\section{Some examples}

The goal of this section is to discuss properly proximal groups of PP-complexity greater than $2$. 

\paragraph{4.1. Groups of PP-complexity $3$.} We begin with a very simple example. 

\begin{prop}\label{Lem:F2xF2}
For a finitely generated free group $F$ of rank at least $2$, the PP-complexity of $F\times F$ is $3$.
\end{prop}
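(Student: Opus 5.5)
The proof has two halves: a lower bound showing that $F\times F$ fails $\PP(1)$ and $\PP(2)$, and an upper bound exhibiting a $\PP(3)$ structure. The upper bound is where the work is, and my construction for it is not yet checked.

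\emph{Lower bound.} First, no group satisfies $\PP(1)$. Inequality (\ref{Eq:PP}) with $n=1$ reads $\one_{g_1A_1}\ge 1+\one_{B_1}$. Since the left side is at most $1$, this forces $B_1=\emptyset$, hence $A_1=\emptyset$ and $g_1A_1=\emptyset$, which contradicts the inequality at any point.

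Next, $F\times F$ does not satisfy $\PP(2)$. It is finitely generated, so by Theorem \ref{Thm:AH} (b) it would otherwise contain a strongly quasi-convex, non-cyclic, free subgroup. As recalled in the introduction, $F\times F$ is a direct product of two infinite finitely generated groups. It is therefore wide in the sense of \cite{DS}, so its asymptotic cones have no cut points \cite{BDM}. But a strongly quasi-convex free subgroup of rank $\ge 2$ contains Morse elements, and these would produce cut points. I will cite these facts rather than reprove them. Hence the PP-complexity is at least $3$.

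\emph{Upper bound.} I want to construct admissible pairs $A_i\subseteq B_i$, $i=1,2,3$, and elements $g_i$. The obvious attempt of pulling back Example \ref{Ex:Free} along a coordinate projection fails. For instance, with $A\times F\subseteq B\times F$ the set $(A\times F)(g,h)\setminus(B\times F)=(Ag\setminus B)\times F$ is infinite unless $Ag\setminus B=\emptyset$. So the pieces must be cut off by a cone condition comparing $|x|$ and $|y|$, so that each piece is "deep" in one coordinate.

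Concretely, let $a,b$ be distinct basis elements of $F$ and let $C_c\subseteq F$ be the set of reduced words beginning with $c$. I will work with the following regions:
\begin{itemize}
\item $P_1=\{(x,y)\mid |x|\ge 2|y|\}$, where $x$ governs the geometry;
\item $P_2=\{(x,y)\mid |y|\ge 2|x|\}$, where $y$ governs the geometry;
\item the middle band $M$, where $|x|$ and $|y|$ are comparable.
\end{itemize}
On $P_1$, I take pairs modelled on Example \ref{Ex:Free} (b) in the $x$-coordinate, with the $B$-set cut off by the slightly weaker condition $|x|\ge \tfrac32|y|$ so that there is room for translation. A pair of this form is admissible: right multiplication by a fixed $(g,h)$ moves $|x|$ and $|y|$ by bounded amounts and moves the first letter of $x$ only when $x$ is short. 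On $P_2$, I do the same in the $y$-coordinate. The third pair handles the band $M$, where both coordinates are long. There I use the product structure: a point is covered if either coordinate lies in a left-translated cone.

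Rather than counting ad hoc, I will arrange all three pairs as level sets of the $\mathcal R_0$-type functions
\[
\phi(x,y)=\frac{|x|}{|x|+|y|}
\]
combined with cone indicators, following the proof of Proposition \ref{Prop:prox-PP} but with thresholds chosen so that only three sets are needed. The choice of $g_i$ will be left multiplications by $(a^{-2},1)$, $(1,b^{-2})$ and a mixed element $(a^{-1},b^{-1})$ for the band. Left multiplication by these elements leaves $\phi$ essentially unchanged for long words, so the cone structure is preserved.

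\emph{Expected obstacle.} The main difficulty is making the counting inequality (\ref{Eq:PP}) hold exactly with three pairs at every point, especially at the interfaces between the cone regions and the band $M$. Near these interfaces, points must be double covered exactly where they lie in some $B_i$, while the cutoffs must still be stable under right translation. My first attempt will be to let the three regions overlap generously, so that in each overlap one translate supplies the "extra" covering as in (\ref{Eq:PP2}). I will then verify the inequality case by case according to the first letters of $x$ and $y$ and the value of $\phi$.
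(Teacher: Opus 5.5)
Your lower bound is correct and matches the paper. $\PP(1)$ fails trivially, and $\PP(2)$ fails by Theorem \ref{Thm:AH} (b), because a product of two infinite groups has no Morse elements.

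The gap is the upper bound. You give no construction, and the one you outline cannot be completed for any choice of sets. Your translations $(a^{-2},1)$, $(1,b^{-2})$, $(a^{-1},b^{-1})$ commute, so they generate an abelian, hence amenable, subgroup $H\le\langle a\rangle\times\langle b\rangle$. However, any witness of $\PP(n)$ must have $\langle g_1,\dots,g_n\rangle$ non-amenable; this is the amenable analogue of Lemma \ref{Lem:subgi}. Since $A_i\subseteq B_i$, (\ref{Eq:PP}) gives $\sum_i(\one_{g_iA_i}-\one_{A_i})\ge 1$. Summing over a finite $\Phi\subseteq H$ yields $|\Phi|\le\sum_i(|A_i\cap g_i^{-1}\Phi|-|A_i\cap\Phi|)\le\sum_i|g_i^{-1}\Phi\triangle\Phi|$, which fails for a F{\o}lner set $\Phi$ of $H$. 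So no tuning of the cones, the $\phi$-thresholds, or the overlaps of $P_1,P_2,M$ can rescue the case analysis. In effect, you have split the two non-commuting translations $a^{-2},b^{-2}$ of Example \ref{Ex:Free} (b) across the two factors, where they commute.

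The missing idea is to use the product structure through a single ``quadrant'' pair, with no length comparison at all. Let $U\subseteq F$ be the set of reduced words not beginning with $a^{-1}$; then $U\subseteq U$ is admissible in $F$. Take $A=U\times U\subseteq B=(U\times F)\cup(F\times U)$. This avoids the obstruction you correctly identified for pulled-back pairs, because $A(r,s)\setminus B=(Ur\setminus U)\times(Us\setminus U)$ is a product of two finite sets. Now use the diagonal elements $s_i=(b^i,b^i)$ and $p_i=(ab^ia,ab^ia)$, $i=0,1,2$. Each of the six sets $G\setminus s_iA$ and $p_iB$ has the form $(S\times F)\cup(F\times S)$. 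Here $S$ runs over the six pairwise disjoint sets of reduced words beginning with $a^{-1}$, $ba^{-1}$, $b^2a^{-1}$, $a^2$, $aba$, $ab^2a$. Each coordinate of a point lies in at most one such $S$, so every point lies in at most two of the six sets. Hence $\sum_i(1-\one_{s_iA})+\sum_i\one_{p_iB}\le 2$, which rearranges to $\sum_i\one_{s_iA}\ge 1+\sum_i\one_{p_iB}$. Setting $A_i=p_iA$, $B_i=p_iB$ (left translates of admissible pairs are admissible) and $g_i=s_ip_i^{-1}$ gives $\PP(3)$. These $g_i$ are diagonal and generate a non-abelian free subgroup, as the amenability obstruction requires. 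The count works because each coordinate causes at most one failure, leaving a surplus of $3-2=1$.
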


\begin{proof}
Let $G=F\times F$ and let $\{ a, b, \ldots \}$ be a basis in $F$. Let $U\subseteq F$ consist of the reduced words in $F$ not beginning with $a^{-1}$, including
the identity. Since right multiplication by a fixed word
changes membership in $U$ only for finitely many words,
the pair $U\subseteq U$ is admissible in $F$.

Let
$$
A=U\times U,\qquad
B=(U\times F)\cup(F\times U).
$$
For every $(r,s)\in G$, we have $A(r,s)\setminus B=(Ur\setminus U)\times(Us\setminus U)$. Thus, the pair $A\subseteq B$ is admissible. 

For $i=0,1,2$, set
$$
u_i=b^i,\qquad v_i=ab^ia,\qquad
s_i=(u_i,u_i),\qquad p_i=(v_i,v_i).
$$
The six sets $u_i(F\setminus U)$ and $v_iU$, $i=0,1,2$,
are pairwise disjoint as their elements begin with
$a^{-1},ba^{-1},b^2a^{-1},a^2,aba,ab^2a$, respectively. Each coordinate of a point $(x,y)\in G$ belongs to at most
one of these six sets. Hence $(x,y)$ belongs to at most
two of the six sets $G\setminus s_iA$ and $p_iB$.
Therefore,
$$
\sum_{i=0}^2\bigl(1-\one_{s_iA}\bigr)
+\sum_{i=0}^2\one_{p_iB}\le 2,
$$
which gives $\PP(3)$.  On the other hand, $G$ contains no Morse elements, so it does not satisfy $\PP(2)$ by Theorem \ref{Thm:AH} (b).
\end{proof}

\paragraph{4.2. Groups of arbitrarily large PP-complexity.} Our next goal is to establish the existence of properly proximal groups of arbitrarily large PP-complexity. Our approach is motivated by results of Ershov--Golan--Sapir on Tarski numbers. We begin with an analogue of \cite[Lemma 4.1]{EGS}.

\begin{lem}\label{Lem:subgi}
    Let $G$ be a group. Suppose that $A_i\subseteq B_i\subseteq G$ and $g_i\in G$, $1\le i\le n$, are subsets and elements satisfying Definition \ref{Def:PP}. Then the subgroup $H=\langle g_1, \ldots, g_{n}\rangle$ is infinite.
\end{lem}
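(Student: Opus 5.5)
Suppose, for contradiction, that $H=\langle g_1,\ldots,g_n\rangle$ is finite. The idea is to average the inequality (\ref{Eq:PP}) over $H$ in a way that makes the left-translations by $g_i$ disappear, and then use admissibility on one large finite set of right translates to get a counting contradiction. In this form it is the analogue of the standard fact that a paradoxical decomposition cannot be witnessed inside a finite group.

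\emph{Step 1 (restriction to cosets).} Since $H$ is finite, every $g_i$ preserves each right coset $Hx$. Summing (\ref{Eq:PP}) over the points $y$ of a single coset $Hx$ gives
$$\sum_{i=1}^n |g_iA_i\cap Hx| \ge |H| + \sum_{i=1}^n |B_i\cap Hx|.$$
Because $g_i^{-1}Hx=Hx$, we have $|g_iA_i\cap Hx|=|A_i\cap Hx|$. We conclude that
$$\sum_i \bigl(|A_i\cap Hx|-|B_i\cap Hx|\bigr)\ge |H|>0.$$
But $A_i\subseteq B_i$, so each term on the left is $\le 0$. This is already a contradiction, with no use of admissibility at all.

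\emph{Step 2 (checking there is no hidden issue).} I will check that finiteness of $H$ is really all that is needed. Since $Hx$ is a finite set invariant under left multiplication by each $g_i$, the counting in Step 1 is legitimate, and the inequality $\one_{A_i}\le\one_{B_i}$ kills the left-hand side. So admissibility is not needed, and nothing remains to be bounded.

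The only point requiring care is the direction of cosets. Left translation by $g_i$ must preserve the finite set being summed over, so one must sum over right cosets $Hx$, not left cosets $xH$. I expect no real obstacle beyond this bookkeeping.
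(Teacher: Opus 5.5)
Your argument is correct and is essentially the paper's proof. The paper sums (\ref{Eq:PP}) over $H$ itself (your coset $Hx$ with $x=1$), uses $g_i^{-1}H=H$ to identify $\sum_{x\in H}\one_{g_iA_i}(x)$ with $\sum_{x\in H}\one_{A_i}(x)$, and gets the contradiction from $A_i\subseteq B_i$ alone, so admissibility plays no role there either.
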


\begin{proof}
Suppose that $H$ is finite. Since $A_i\subseteq B_i$, summing (\ref{Eq:PP}) over $H$ gives
\begin{equation}\label{Eq:sum}
\sum_{i=1}^n\sum_{x\in H}
\Big(\one_{g_iA_i}(x)-\one_{A_i}(x)\Big)\ge \sum_{i=1}^n\sum_{x\in H}
\Big(\one_{g_iA_i}(x)-\one_{B_i}(x)\Big)\ge |H|.
\end{equation}
However, $g_i^{-1}H=H$ for every $i$, so
$$
\sum_{x\in H}\one_{g_iA_i}(x)
=\sum_{x\in H}\one_{A_i}(g_i^{-1}x)
=\sum_{x\in H}\one_{A_i}(x).
$$
Thus, the left side of (\ref{Eq:sum}) equals $0$, a contradiction.
\end{proof}

\begin{proof}[Proof of Theorem \ref{Thm:FI}]
Building on work of Golod \cite{Gol} (see also \cite[Theorem 3.3]{Ers11}), Ershov--Golan--Sapir \cite[Proposition 4.6]{EGS} constructed a finitely generated Golod--Shafarevich group $G$ with the following property:

\smallskip

($\ast$) \textit{For every integer $m\ge 1$, there exists a finite index subgroup $H_m$ of $G$ such that every $m$-generated subgroup of $H_m$ is finite.}

\smallskip

We refer the interested reader to \cite{Ers11} for the definition and basic properties of Golod--Shafarevich groups. Here we only need the fact that every such group is non-amenable \cite{Ers12}. Thus, the wreath product group $W=\ZZ_2\wr G$ is properly proximal by \cite[Theorem~1.7]{DKE}. Moreover, $W$ is finitely generated.

Fix an integer $m\ge 2$, and let $K_m$ be the full preimage of $H_m$ under the canonical epimorphism $\e\colon W\to G$.  Then $[W:K_m]=[G:H_m]<\infty$. Since finite index subgroups of properly proximal groups are properly proximal \cite[Proposition 4.10]{BIP}, every $K_m$ is properly proximal.

We claim that the PP-complexity of $K_m$ is greater than $m$. Suppose, to the contrary, that $K_m$ satisfies $\PP(m)$, and let $g_1,\ldots,g_m\in K_m$ be translations witnessing $\PP(m)$. The image of $L_m=\langle g_1,\ldots,g_m\rangle$ under $\e$ is finite by ($\ast$). Hence $L_m\cap\Ker\e$ has finite index in
$L_m$ and is therefore finitely generated. Since $\Ker\e=\bigoplus_{g\in G}\ZZ_2$ is locally finite, the subgroup $L_m\cap\Ker\e$ is finite. Thus, $L_m$ is finite, contradicting Lemma \ref{Lem:subgi}.
\end{proof}

\end{document}

%% file: f0.pdf_tex
\begingroup%
  \makeatletter%
  \providecommand\color[2][]{%
    \errmessage{(Inkscape) Color is used for the text in Inkscape, but the package 'color.sty' is not loaded}%
    \renewcommand\color[2][]{}%
  }%
  \providecommand\transparent[1]{%
    \errmessage{(Inkscape) Transparency is used (non-zero) for the text in Inkscape, but the package 'transparent.sty' is not loaded}%
    \renewcommand\transparent[1]{}%
  }%
  \providecommand\rotatebox[2]{#2}%
  \newcommand*\fsize{\dimexpr\f@size pt\relax}%
  \newcommand*\lineheight[1]{\fontsize{\fsize}{#1\fsize}\selectfont}%
  \ifx\svgwidth\undefined%
    \setlength{\unitlength}{540.95314287bp}%
    \ifx\svgscale\undefined%
      \relax%
    \else%
      \setlength{\unitlength}{\unitlength * \real{\svgscale}}%
    \fi%
  \else%
    \setlength{\unitlength}{\svgwidth}%
  \fi%
  \global\let\svgwidth\undefined%
  \global\let\svgscale\undefined%
  \makeatother%
  \begin{picture}(1,0.41166225)%
    \lineheight{1}%
    \setlength\tabcolsep{0pt}%
    \put(0,0){\includegraphics[width=\unitlength,page=1]{f0.pdf}}%
    \put(0.28712461,0.31975182){\color[rgb]{0,0,0}\makebox(0,0)[lt]{\begin{minipage}{0.18773406\unitlength}\raggedright \end{minipage}}}%
    \put(0.34131307,0.21420728){\color[rgb]{0,0,0}\makebox(0,0)[lt]{\lineheight{1.25}\smash{\begin{tabular}[t]{l}$A_1$\end{tabular}}}}%
    \put(0.19402924,0.36247345){\color[rgb]{0,0,0}\makebox(0,0)[lt]{\lineheight{1.25}\smash{\begin{tabular}[t]{l}$A_2$\end{tabular}}}}%
    \put(0.23545662,0.33143378){\color[rgb]{0,0,0}\makebox(0,0)[lt]{\lineheight{1.25}\smash{\begin{tabular}[t]{l}$B_2$\end{tabular}}}}%
    \put(0.30826336,0.25984117){\color[rgb]{0,0,0}\makebox(0,0)[lt]{\lineheight{1.25}\smash{\begin{tabular}[t]{l}$B_1$\end{tabular}}}}%
    \put(0,0){\includegraphics[width=\unitlength,page=2]{f0.pdf}}%
    \put(0.14174091,0.20476568){\color[rgb]{0,0,0}\makebox(0,0)[lt]{\lineheight{1.25}\smash{\begin{tabular}[t]{l}$a$\end{tabular}}}}%
    \put(0.20642873,0.20476568){\color[rgb]{0,0,0}\makebox(0,0)[lt]{\lineheight{1.25}\smash{\begin{tabular}[t]{l}$a$\end{tabular}}}}%
    \put(0.18666303,0.22042424){\color[rgb]{0,0,0}\makebox(0,0)[lt]{\lineheight{1.25}\smash{\begin{tabular}[t]{l}$b$\end{tabular}}}}%
    \put(0.1869197,0.15933021){\color[rgb]{0,0,0}\makebox(0,0)[lt]{\lineheight{1.25}\smash{\begin{tabular}[t]{l}$b$\end{tabular}}}}%
    \put(0,0){\includegraphics[width=\unitlength,page=3]{f0.pdf}}%
    \put(0.66304302,0.34047507){\color[rgb]{0,0,0}\makebox(0,0)[lt]{\lineheight{1.25}\smash{\begin{tabular}[t]{l}$B_2$\end{tabular}}}}%
    \put(0,0){\includegraphics[width=\unitlength,page=4]{f0.pdf}}%
    \put(0.58540205,0.20476572){\color[rgb]{0,0,0}\makebox(0,0)[lt]{\lineheight{1.25}\smash{\begin{tabular}[t]{l}$a$\end{tabular}}}}%
    \put(0.65008886,0.20476572){\color[rgb]{0,0,0}\makebox(0,0)[lt]{\lineheight{1.25}\smash{\begin{tabular}[t]{l}$a$\end{tabular}}}}%
    \put(0.63032353,0.22042428){\color[rgb]{0,0,0}\makebox(0,0)[lt]{\lineheight{1.25}\smash{\begin{tabular}[t]{l}$b$\end{tabular}}}}%
    \put(0.63058011,0.15933027){\color[rgb]{0,0,0}\makebox(0,0)[lt]{\lineheight{1.25}\smash{\begin{tabular}[t]{l}$b$\end{tabular}}}}%
    \put(0,0){\includegraphics[width=\unitlength,page=5]{f0.pdf}}%
    \put(0.76643142,0.24273118){\color[rgb]{0,0,0}\makebox(0,0)[lt]{\lineheight{1.25}\smash{\begin{tabular}[t]{l}$B_1$\end{tabular}}}}%
    \put(0.67178269,0.04469837){\color[rgb]{0,0,0}\makebox(0,0)[lt]{\lineheight{1.25}\smash{\begin{tabular}[t]{l}$a^{-2}A_1$\end{tabular}}}}%
    \put(0.77441438,0.34434825){\color[rgb]{0,0,0}\makebox(0,0)[lt]{\lineheight{1.25}\smash{\begin{tabular}[t]{l}$a^{-2}A_1\cap b^{-2}A_2$\end{tabular}}}}%
    \put(0.45619329,0.28261132){\color[rgb]{0,0,0}\makebox(0,0)[lt]{\lineheight{1.25}\smash{\begin{tabular}[t]{l}$b^{-2}A_2$\end{tabular}}}}%
    \put(0,0){\includegraphics[width=\unitlength,page=6]{f0.pdf}}%
  \end{picture}%
\endgroup%

%% file: f2.pdf_tex
\begingroup%
  \makeatletter%
  \providecommand\color[2][]{%
    \errmessage{(Inkscape) Color is used for the text in Inkscape, but the package 'color.sty' is not loaded}%
    \renewcommand\color[2][]{}%
  }%
  \providecommand\transparent[1]{%
    \errmessage{(Inkscape) Transparency is used (non-zero) for the text in Inkscape, but the package 'transparent.sty' is not loaded}%
    \renewcommand\transparent[1]{}%
  }%
  \providecommand\rotatebox[2]{#2}%
  \newcommand*\fsize{\dimexpr\f@size pt\relax}%
  \newcommand*\lineheight[1]{\fontsize{\fsize}{#1\fsize}\selectfont}%
  \ifx\svgwidth\undefined%
    \setlength{\unitlength}{227.57080391bp}%
    \ifx\svgscale\undefined%
      \relax%
    \else%
      \setlength{\unitlength}{\unitlength * \real{\svgscale}}%
    \fi%
  \else%
    \setlength{\unitlength}{\svgwidth}%
  \fi%
  \global\let\svgwidth\undefined%
  \global\let\svgscale\undefined%
  \makeatother%
  \begin{picture}(1,0.53848422)%
    \lineheight{1}%
    \setlength\tabcolsep{0pt}%
    \put(0,0){\includegraphics[width=\unitlength,page=1]{f2.pdf}}%
    \put(0.28784281,0.09135367){\color[rgb]{0,0,0}\makebox(0,0)[lt]{\lineheight{1.25}\smash{\begin{tabular}[t]{l}$h_i^r$\end{tabular}}}}%
    \put(0.67245506,0.09132254){\color[rgb]{0,0,0}\makebox(0,0)[lt]{\lineheight{1.25}\smash{\begin{tabular}[t]{l}$h_i^s$\end{tabular}}}}%
    \put(0.89580224,0.5114322){\color[rgb]{0,0,0}\makebox(0,0)[lt]{\lineheight{1.25}\smash{\begin{tabular}[t]{l}$g$\end{tabular}}}}%
    \put(0.12205115,0.12376128){\color[rgb]{0,0,0}\makebox(0,0)[lt]{\lineheight{1.25}\smash{\begin{tabular}[t]{l}$H_i$\end{tabular}}}}%
    \put(0,0){\includegraphics[width=\unitlength,page=2]{f2.pdf}}%
    \put(0.58484001,0.36308594){\color[rgb]{0,0,0}\makebox(0,0)[lt]{\lineheight{1.25}\smash{\begin{tabular}[t]{l}$p$\end{tabular}}}}%
    \put(0.78334112,0.28300392){\color[rgb]{0,0,0}\makebox(0,0)[lt]{\lineheight{1.25}\smash{\begin{tabular}[t]{l}$q$\end{tabular}}}}%
    \put(0.4936829,0.16628872){\color[rgb]{0,0,0}\makebox(0,0)[lt]{\lineheight{1.25}\smash{\begin{tabular}[t]{l}$e$\end{tabular}}}}%
  \end{picture}%
\endgroup%

%% file: f3.pdf_tex
\begingroup%
  \makeatletter%
  \providecommand\color[2][]{%
    \errmessage{(Inkscape) Color is used for the text in Inkscape, but the package 'color.sty' is not loaded}%
    \renewcommand\color[2][]{}%
  }%
  \providecommand\transparent[1]{%
    \errmessage{(Inkscape) Transparency is used (non-zero) for the text in Inkscape, but the package 'transparent.sty' is not loaded}%
    \renewcommand\transparent[1]{}%
  }%
  \providecommand\rotatebox[2]{#2}%
  \newcommand*\fsize{\dimexpr\f@size pt\relax}%
  \newcommand*\lineheight[1]{\fontsize{\fsize}{#1\fsize}\selectfont}%
  \ifx\svgwidth\undefined%
    \setlength{\unitlength}{459.30576907bp}%
    \ifx\svgscale\undefined%
      \relax%
    \else%
      \setlength{\unitlength}{\unitlength * \real{\svgscale}}%
    \fi%
  \else%
    \setlength{\unitlength}{\svgwidth}%
  \fi%
  \global\let\svgwidth\undefined%
  \global\let\svgscale\undefined%
  \makeatother%
  \begin{picture}(1,0.2835879)%
    \lineheight{1}%
    \setlength\tabcolsep{0pt}%
    \put(0,0){\includegraphics[width=\unitlength,page=1]{f3.pdf}}%
    \put(0.13183769,0.03873107){\color[rgb]{0,0,0}\makebox(0,0)[lt]{\lineheight{1.25}\smash{\begin{tabular}[t]{l}$a$\end{tabular}}}}%
    \put(0.30786514,0.03871564){\color[rgb]{0,0,0}\makebox(0,0)[lt]{\lineheight{1.25}\smash{\begin{tabular}[t]{l}$b$\end{tabular}}}}%
    \put(0.05595867,0.06131958){\color[rgb]{0,0,0}\makebox(0,0)[lt]{\lineheight{1.25}\smash{\begin{tabular}[t]{l}$H_i$\end{tabular}}}}%
    \put(0,0){\includegraphics[width=\unitlength,page=2]{f3.pdf}}%
    \put(0.21951421,0.03993523){\color[rgb]{0,0,0}\makebox(0,0)[lt]{\lineheight{1.25}\smash{\begin{tabular}[t]{l}$e$\end{tabular}}}}%
    \put(0.32005158,0.25128469){\color[rgb]{0,0,0}\makebox(0,0)[lt]{\lineheight{1.25}\smash{\begin{tabular}[t]{l}$gf$\end{tabular}}}}%
    \put(0.11576475,0.25205693){\color[rgb]{0,0,0}\makebox(0,0)[lt]{\lineheight{1.25}\smash{\begin{tabular}[t]{l}$g$\end{tabular}}}}%
    \put(0,0){\includegraphics[width=\unitlength,page=3]{f3.pdf}}%
    \put(0.21579445,0.27018451){\color[rgb]{0,0,0}\makebox(0,0)[lt]{\lineheight{1.25}\smash{\begin{tabular}[t]{l}$gr$\end{tabular}}}}%
    \put(0,0){\includegraphics[width=\unitlength,page=4]{f3.pdf}}%
    \put(0.6739594,0.03873109){\color[rgb]{0,0,0}\makebox(0,0)[lt]{\lineheight{1.25}\smash{\begin{tabular}[t]{l}$a$\end{tabular}}}}%
    \put(0.84998886,0.03871566){\color[rgb]{0,0,0}\makebox(0,0)[lt]{\lineheight{1.25}\smash{\begin{tabular}[t]{l}$b$\end{tabular}}}}%
    \put(0.5980799,0.0613196){\color[rgb]{0,0,0}\makebox(0,0)[lt]{\lineheight{1.25}\smash{\begin{tabular}[t]{l}$H_i$\end{tabular}}}}%
    \put(0,0){\includegraphics[width=\unitlength,page=5]{f3.pdf}}%
    \put(0.76163671,0.03993526){\color[rgb]{0,0,0}\makebox(0,0)[lt]{\lineheight{1.25}\smash{\begin{tabular}[t]{l}$e$\end{tabular}}}}%
    \put(0.86217048,0.25128471){\color[rgb]{0,0,0}\makebox(0,0)[lt]{\lineheight{1.25}\smash{\begin{tabular}[t]{l}$gf$\end{tabular}}}}%
    \put(0.65788675,0.25205693){\color[rgb]{0,0,0}\makebox(0,0)[lt]{\lineheight{1.25}\smash{\begin{tabular}[t]{l}$g$\end{tabular}}}}%
    \put(0,0){\includegraphics[width=\unitlength,page=6]{f3.pdf}}%
    \put(0.77447858,0.18392942){\color[rgb]{0,0,0}\makebox(0,0)[lt]{\lineheight{1.25}\smash{\begin{tabular}[t]{l}$gr$\end{tabular}}}}%
    \put(0,0){\includegraphics[width=\unitlength,page=7]{f3.pdf}}%
    \put(0.7606508,0.09175418){\color[rgb]{0,0,0}\makebox(0,0)[lt]{\lineheight{1.25}\smash{\begin{tabular}[t]{l}$gv=h_i^m$\end{tabular}}}}%
  \end{picture}%
\endgroup%

%% file: f4.pdf_tex
\begingroup%
  \makeatletter%
  \providecommand\color[2][]{%
    \errmessage{(Inkscape) Color is used for the text in Inkscape, but the package 'color.sty' is not loaded}%
    \renewcommand\color[2][]{}%
  }%
  \providecommand\transparent[1]{%
    \errmessage{(Inkscape) Transparency is used (non-zero) for the text in Inkscape, but the package 'transparent.sty' is not loaded}%
    \renewcommand\transparent[1]{}%
  }%
  \providecommand\rotatebox[2]{#2}%
  \newcommand*\fsize{\dimexpr\f@size pt\relax}%
  \newcommand*\lineheight[1]{\fontsize{\fsize}{#1\fsize}\selectfont}%
  \ifx\svgwidth\undefined%
    \setlength{\unitlength}{302.38032376bp}%
    \ifx\svgscale\undefined%
      \relax%
    \else%
      \setlength{\unitlength}{\unitlength * \real{\svgscale}}%
    \fi%
  \else%
    \setlength{\unitlength}{\svgwidth}%
  \fi%
  \global\let\svgwidth\undefined%
  \global\let\svgscale\undefined%
  \makeatother%
  \begin{picture}(1,0.43722442)%
    \lineheight{1}%
    \setlength\tabcolsep{0pt}%
    \put(0,0){\includegraphics[width=\unitlength,page=1]{f4.pdf}}%
    \put(0.33712722,0.05279791){\color[rgb]{0,0,0}\makebox(0,0)[lt]{\lineheight{1.25}\smash{\begin{tabular}[t]{l}$\rho$-neighborhood of $F$\end{tabular}}}}%
    \put(0,0){\includegraphics[width=\unitlength,page=2]{f4.pdf}}%
    \put(0.20318608,0.14987977){\color[rgb]{0,0,0}\makebox(0,0)[lt]{\lineheight{1.25}\smash{\begin{tabular}[t]{l}$p_-$\end{tabular}}}}%
    \put(0.80660447,0.15059627){\color[rgb]{0,0,0}\makebox(0,0)[lt]{\lineheight{1.25}\smash{\begin{tabular}[t]{l}$p_+$\end{tabular}}}}%
    \put(0.48402193,0.14164151){\color[rgb]{0,0,0}\makebox(0,0)[lt]{\lineheight{1.25}\smash{\begin{tabular}[t]{l}$F$\end{tabular}}}}%
    \put(0,0){\includegraphics[width=\unitlength,page=3]{f4.pdf}}%
    \put(0.68296633,0.14495424){\color[rgb]{0,0,0}\makebox(0,0)[lt]{\lineheight{1.25}\smash{\begin{tabular}[t]{l}$\pi(q_+)$\end{tabular}}}}%
    \put(0.30840041,0.14079563){\color[rgb]{0,0,0}\makebox(0,0)[lt]{\lineheight{1.25}\smash{\begin{tabular}[t]{l}$\pi(q_-)$\end{tabular}}}}%
    \put(0,0){\includegraphics[width=\unitlength,page=4]{f4.pdf}}%
    \put(0.78546413,0.39957628){\color[rgb]{0,0,0}\makebox(0,0)[lt]{\lineheight{1.25}\smash{\begin{tabular}[t]{l}$\le (L+1)\rho$\end{tabular}}}}%
    \put(0.4799558,0.41686512){\color[rgb]{0,0,0}\makebox(0,0)[lt]{\lineheight{1.25}\smash{\begin{tabular}[t]{l}$q$\end{tabular}}}}%
    \put(0.25769775,0.32318303){\color[rgb]{0,0,0}\makebox(0,0)[lt]{\lineheight{1.25}\smash{\begin{tabular}[t]{l}$q_-$\end{tabular}}}}%
    \put(0.67730792,0.32876781){\color[rgb]{0,0,0}\makebox(0,0)[lt]{\lineheight{1.25}\smash{\begin{tabular}[t]{l}$q_+$\end{tabular}}}}%
    \put(0.42532384,0.24589527){\color[rgb]{0,0,0}\makebox(0,0)[lt]{\lineheight{1.25}\smash{\begin{tabular}[t]{l}$\le \ell(q)/R+1$\end{tabular}}}}%
    \put(0,0){\includegraphics[width=\unitlength,page=5]{f4.pdf}}%
  \end{picture}%
\endgroup%

%% file: f1.pdf_tex
\begingroup%
  \makeatletter%
  \providecommand\color[2][]{%
    \errmessage{(Inkscape) Color is used for the text in Inkscape, but the package 'color.sty' is not loaded}%
    \renewcommand\color[2][]{}%
  }%
  \providecommand\transparent[1]{%
    \errmessage{(Inkscape) Transparency is used (non-zero) for the text in Inkscape, but the package 'transparent.sty' is not loaded}%
    \renewcommand\transparent[1]{}%
  }%
  \providecommand\rotatebox[2]{#2}%
  \newcommand*\fsize{\dimexpr\f@size pt\relax}%
  \newcommand*\lineheight[1]{\fontsize{\fsize}{#1\fsize}\selectfont}%
  \ifx\svgwidth\undefined%
    \setlength{\unitlength}{282.64061048bp}%
    \ifx\svgscale\undefined%
      \relax%
    \else%
      \setlength{\unitlength}{\unitlength * \real{\svgscale}}%
    \fi%
  \else%
    \setlength{\unitlength}{\svgwidth}%
  \fi%
  \global\let\svgwidth\undefined%
  \global\let\svgscale\undefined%
  \makeatother%
  \begin{picture}(1,0.53152261)%
    \lineheight{1}%
    \setlength\tabcolsep{0pt}%
    \put(0,0){\includegraphics[width=\unitlength,page=1]{f1.pdf}}%
    \put(0.09124523,0.25013303){\color[rgb]{0,0,0}\makebox(0,0)[lt]{\lineheight{1.25}\smash{\begin{tabular}[t]{l}$g_1A_1$\end{tabular}}}}%
    \put(4.64713585,-0.88921865){\color[rgb]{0,0,0}\makebox(0,0)[lt]{\begin{minipage}{0.03186631\unitlength}\raggedright \end{minipage}}}%
    \put(0.85667003,0.25013307){\color[rgb]{0,0,0}\makebox(0,0)[lt]{\lineheight{1.25}\smash{\begin{tabular}[t]{l}$g_2A_2$\end{tabular}}}}%
    \put(0.48025256,0.32039777){\color[rgb]{0,0,0}\makebox(0,0)[lt]{\lineheight{1.25}\smash{\begin{tabular}[t]{l}$B_1$\end{tabular}}}}%
    \put(0.48025263,0.37730817){\color[rgb]{0,0,0}\makebox(0,0)[lt]{\lineheight{1.25}\smash{\begin{tabular}[t]{l}$A_1$\end{tabular}}}}%
    \put(0.48025256,0.18972398){\color[rgb]{0,0,0}\makebox(0,0)[lt]{\lineheight{1.25}\smash{\begin{tabular}[t]{l}$B_2$\end{tabular}}}}%
    \put(0.48025256,0.12983773){\color[rgb]{0,0,0}\makebox(0,0)[lt]{\lineheight{1.25}\smash{\begin{tabular}[t]{l}$A_2$\end{tabular}}}}%
  \end{picture}%
\endgroup%